\title{Simple groups and the number of countable models}
\author{Predrag
Tanovi\'c\thanks{Supported by Ministry of Science and Technology
of Serbia}\\Mathematical Institute SANU, Belgrade, Serbia}
 \newtheorem{thm}{Theorem}
 \newtheorem{prop}{Proposition}[section]
 \newtheorem{lem}[prop]{Lemma}
 \newtheorem{cor}[prop]{Corollary}
  \newtheorem{dfn}[prop]{Definition}{\bf}{\rm}
 \newtheorem{rmk}[prop]{Remark}{\bf}{\rm}
 \newtheorem{fact}[prop]{Fact}
 \newtheorem{que}{Question}
 \newtheorem{con}{Conjecture}
 \newcommand{\aor}{\makebox[1em]
{\raisebox{1ex}[0ex][0ex]{\makebox[0em] {$\scriptscriptstyle
a$}}{\makebox[.4em]{$\bot$}}}}
\newcommand{\ind}{\makebox[1em]{\raisebox{-.5ex}[0ex][0ex]{\makebox[0em]%
{$\smile$}}\raisebox{.4ex}[0ex][0ex]{\makebox[-.02em]{$|$}}}}
\newcommand{\dep}{\makebox[1em]{\raisebox{.3ex}[0ex][0ex]%
{$\not$}\makebox[.7em]{\ind}}}
\newcommand \nor {\mathop{\not\perp}}
\newcommand \tp {{\rm tp}}
\newcommand \stp {{\rm stp}}
\newcommand \dcl {{\rm dcl}}
\newcommand \acl {{\rm acl}}
\newcommand \cl {{\rm cl}}
\newcommand \dom {{\rm dom}}
\newcommand \wt {{\rm wt}}
\newcommand \Cb {{\rm Cb}}
\newcommand \Aut {{\rm Aut}}
\begin{document}



\maketitle

\begin{abstract}
Let $T$ be a complete, superstable theory with fewer than
$2^{\aleph_{0}}$ countable models.   Assuming that generic types
of infinite, simple groups definable in $T^{eq}$ are sufficiently
non-isolated we prove that $\omega^{\omega}$ is the strict upper
bound for  the Lascar rank of   $T$.
\end{abstract}

\bigskip
Throughout the paper  $T$ is a   complete, superstable theory in a
countable language having infinite models. $I(T,\aleph_{0})$ is
the number of its countable models. $S_n(T)$ is the space of all
complete $n$-types and $S(T)=\bigcup_{n\in\omega}S_n(T)$.  $U$
denotes Lascar's rank of complete types and  $U$-rank of the
theory is \   $U(T)=\sup\{U(p)\,|\,p\in S(T)\}$.\, In \cite{T} it
was conjectured:

\begin{con}\label{C1} \ \
$U(T)\geq\omega^{\omega}$ \ implies \
$I(T,\aleph_{0})=2^{\aleph_{0}}$.
\end{con}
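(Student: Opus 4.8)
The plan is to establish the contrapositive: assuming $I(T,\aleph_{0})<2^{\aleph_{0}}$, so that $T$ is \emph{small} ($S(T)$ is countable, isolated types are dense, and prime models over finite sets exist), I would prove $U(T)<\omega^{\omega}$. Suppose toward a contradiction that $U(T)\geq\omega^{\omega}$. Reading $U$-ranks in Cantor normal form, $\omega^{\omega}=\sup_{n}\omega^{n}$, so the hypothesis cannot be accounted for by boundedly many ``dimensions''; using the Lascar inequalities and the coordinatewise (regular) analysis of superstable types, I would extract from this unbounded $\omega^{n}$-behaviour a single stationary type $p$ over a finite set, together with a regular type $r$ in its analysis whose forking geometry carries arbitrarily deep rank. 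Localizing at $r$ and passing to $T^{eq}$, the aim of the first step is to isolate a minimal such \emph{deep} configuration, minimal in the sense that all strictly lower-rank pieces are already well understood.

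The second step is to attach a group. The heuristic, in the spirit of the group-existence theorems of Zilber and Hrushovski, is that the unbounded rank concentrated on $r$ cannot come from the pure forking geometry unless that geometry is coordinatized by an infinite group: a group configuration assembled from independent realizations of $r$, or a binding-group argument for the automorphisms fixing the lower levels, yields an infinite group $G$ definable in $T^{eq}$ on which the critical rank lives. I would then pass to the connected component $G^{0}$ and run down a series of definable connected subgroups. Each proper infinite composition factor that is abelian or solvable has strictly smaller $U$-rank and is handled by the descending induction on rank together with the analysis of modular groups; what survives as the genuinely new object is an infinite \emph{simple} group $H$ definable in $T^{eq}$. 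This is exactly the reduction announced in the abstract, and it is the point at which the structural hypothesis is invoked.

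With $H$ in hand, the third step is to manufacture $2^{\aleph_{0}}$ pairwise non-isomorphic countable models from the generic type $q$ of $H$, contradicting smallness. The mechanism is non-isolation together with the homogeneity of the group: by hypothesis $q$ is sufficiently non-isolated, and the action of $H$ on its own generics produces a definable family of conjugate, pairwise non-orthogonal but independently witnessed copies of $q$. Using the semi-isolation relation $\Sem$ on this family, I would build a tree of countable models indexed by $2^{<\omega}$ in which, at each level, one prescribes whether a designated generic (or coset translate) is realized or omitted; the weight estimates coming from the high rank make the choices along different coordinates sufficiently independent that distinct branches $\eta\in 2^{\omega}$ determine models with distinct ``generic spectra.'' The group-induced homogeneity forces these spectra to be isomorphism invariants, so the $2^{\aleph_{0}}$ branches yield $2^{\aleph_{0}}$ non-isomorphic models.

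The main obstacle is precisely the non-isolation input, and this is why the unconditional statement of Conjecture~\ref{C1} is harder than the theorem proved under the stated hypothesis. In a small theory isolated types are dense, and nothing in the rank computation by itself forbids the generic of $H$ from being isolated; were $q$ isolated, all its translates would be isolated and already realized in the prime model, the tree would collapse, and no coding would remain. Thus the crux is to show, from superstability alone together with $U(H)$ large, that the generic of an infinite simple definable group must be non-isolated to the degree the coding requires. Proving this for \emph{all} such $H$ -- rather than assuming it -- is the remaining gap, and I expect it to demand a classification of the admissible forking geometries on the generic of a simple superstable group, showing that isolation of the generic would force $H$ either to be finite or to have its rank collapse below $\omega^{\omega}$.
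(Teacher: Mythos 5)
You were asked to prove Conjecture \ref{C1}, and the first thing to say is that the paper itself does not prove it: the conjecture is open, and the paper establishes only the conditional Theorem \ref{Thm1}, in which the ESN-ness of generics of simple groups of $U$-rank below $\omega^{\omega}$ is a \emph{hypothesis} (see also Question \ref{q1} and the closing Conjecture). Your proposal reconstructs the broad shape of that conditional argument and, to your credit, honestly names the missing ingredient in your final paragraph. But that means your attempt has a genuine and, in the current state of knowledge, irreducible gap, and it is exactly the one you flag: nothing known forces the generic of an infinite simple superstable group to be non-isolated (let alone ESN), and the paper assumes this rather than proving it. So the proposal is not a proof of the statement; at best it is a sketch of the reduction that the paper carries out rigorously.

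Beyond the acknowledged gap, several steps of your sketch would not survive being made precise. First, ``sufficiently non-isolated'' must be the precise notion ESN: mere eventual non-isolation is too weak, and the model-counting mechanism is not your ad hoc tree-of-models coding via semi-isolation (whose claim that ``generic spectra are isomorphism invariants'' is unjustified) but Proposition \ref{Prop1}, which needs an infinite family of regular ESN types of ranks $\omega^{n}$ over finite domains. Moreover, the way a failure of ESN is exploited is the dichotomy of Theorem \ref{Thm2}: a non-ESN regular type is nonorthogonal to a NENI type, and NENI types are isolated \emph{and internally isolated}; it is this internal isolation (relative definability of $\cl_p$, Lemma \ref{new1}) that makes the Grassmannian types $p_{(n)}$, the localized types $p_c$, the controlling regular type $q$ of rank $\omega^{n}$ (Proposition \ref{Prop2}), and the binding group with its $2$-transitive action available --- a bare ``group configuration from independent realizations of $r$'' has no such purchase in this setting. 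Second, your disposal of abelian composition factors ``by descending induction and the analysis of modular groups'' is not how the argument can go: in Proposition \ref{Lem6} the abelian branch produces, via Zilber indecomposability and a $G$-minimal subgroup of $Z(G')$, a $q$-connected \emph{field}, and that case must be eliminated unconditionally --- this is Lemma \ref{Lem4}, whose proof uses the relatively definable subfields $E\subseteq E'$ (definability again coming from internal isolation of the NENI type) and the estimate $U(E)\cdot m\leq U(E')$. Omitting the field branch leaves even the conditional reduction to a simple group incomplete; and once the simple group is obtained, its assumed ESN generic transfers ESN-ness to $q$ by Theorem \ref{Thm2}, contradicting the choice of $q$ --- no further coding is needed.
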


\noindent There  the conjecture was proved for trivial theories
and for one-based theories, but the general case is still open
even for $\aleph_0$-stable theories. The proof in \cite{T} was
based on a technical fact (see Proposition \ref{Prop1} below)
asserting that whenever in a superstable theory there exists an
infinite family of sufficiently non-isolated types $\{p_n\,|\,n\in
I\}$  such that  each $p_n$ has a finite domain and $U$-rank equal
to $\omega^n$, then $I(T,\aleph_{0})=2^{\aleph_{0}}$ holds; here
`sufficiently non-isolated' refers to `eventually strongly
non-isolated', or ESN for short, which is defined below.  So, in
order to prove the conjecture, assuming $U(T)\geq \omega^{\omega}$
it suffices to find an infinite family of ESN types $p_n$ with
$U(p_n)=\omega^n$. This was easily done in \cite{T} because in a
one-based or trivial theory any type of limit-ordinal $U$-rank
turned out to be ESN. In this article we will  show that the
nonexistence of such a family in the general case  is of geometric
nature: it is caused by a presence of simple groups of $U$-rank
$\omega^n\cdot k$  in $T^{eq}$ where $n$   can be arbitrarily
large natural number. Also, we will  prove that the generic type
of a field of $U$-rank $\omega^n\cdot k$  is ESN, so these simple
groups are `big-bad': they do not interpret a field of
approximately the same $U$-rank as that of the group.

\smallskip
The situation is clear in the finite rank case: it is well known
that any simple group of finite  rank is $\aleph_1$-categorical
but not $\aleph_0$-categorical. This implies that its generic type
is eventually non-isolated, meaning that its nonforking extension
over some finite set is non-isolated; it is also ESN, because the
two notions coincide for types of rank 1. It is interesting
whether the generic type of  an arbitrary superstable group is
eventually nonisolated.

\begin{que}\label{q1} Is the generic type of any simple, superstable
group eventually non-isolated?
\end{que}

The main result of this article is:

\begin{thm}\label{Thm1} If $T$ is superstable,
$U(T)\geq\omega^{\omega}$ and the generic type of any simple group
definable in $T^{eq}$ of $U$-rank smaller than $\omega^{\omega}$
is eventually strongly non-isolated  \ then \
$I(T,\aleph_{0})=2^{\aleph_{0}}$.
\end{thm}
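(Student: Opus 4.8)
The plan is to deduce the conclusion from Proposition \ref{Prop1}: it suffices to produce, for infinitely many $n$, a type $p_n$ of finite domain with $U(p_n)=\omega^n$ that is eventually strongly non-isolated (ESN). The hypothesis $U(T)\geq\omega^{\omega}$ already supplies the ranks for free: by Lascar's inequalities and the ordinal arithmetic of $U$-rank in a superstable theory, for every $n$ there is a complete type over a finite set of $U$-rank exactly $\omega^n$. Thus the whole content of Theorem \ref{Thm1} lies in arranging these types, or suitable replacements of the same rank, to be ESN, and the role of the group hypothesis is precisely to remove the one geometric obstruction to doing so.

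I would argue by contradiction, assuming $I(T,\aleph_0)<2^{\aleph_0}$. By the contrapositive of Proposition \ref{Prop1} there is then a bound $N$ such that for every $n\geq N$ no finite-domain type of $U$-rank $\omega^n$ is ESN; together with the previous paragraph this means that for all large $n$ every type of rank $\omega^n$ over a finite set fails to be ESN. The next step is to read geometric information off this uniform failure. In \cite{T} the trivial and one-based cases were handled by showing that there every limit-rank type is automatically ESN, so the failure must come from the non-modular part of the forking geometry. I would therefore pass to a regular type witnessing rank $\omega^n$ and examine its associated geometry: triviality or local modularity would return an ESN type as in \cite{T}, so the geometry must be non-locally-modular, and Hrushovski's theorem on definable groups then produces an infinite group definable in $T^{eq}$ acting on it.

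From that group I would extract a definable simple group $G$, by dividing out the appropriate solvable radical and using superstability to keep the $U$-rank above $\omega^{N}$, obtaining $U(G)=\omega^{m}\cdot k$ with $m$ growing as $n$ grows and hence $U(G)<\omega^{\omega}$. Here the hypothesis of the theorem applies and gives that the generic type $g$ of $G$ is ESN. It then remains to convert $g$ into a finite-domain ESN type of rank exactly $\omega^{m}$ — reading off, from the Cantor normal form of $U(g)=\omega^{m}\cdot k$ and the forking calculus inside $G$, a semi-regular coordinate type of rank $\omega^{m}$ that inherits eventual strong non-isolation from $g$. Since $m\geq N$ for large $n$, this contradicts the failure of ESN at rank $\omega^{m}$ established above, and the contradiction proves $I(T,\aleph_0)=2^{\aleph_0}$.

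The main obstacle is the middle step: showing that the uniform failure of ESN at limit ranks genuinely forces a definable simple group of comparable $U$-rank, i.e. that the only obstruction to eventual strong non-isolation is the geometric one announced in the introduction. This demands a careful regular-type and group-configuration analysis, and it must be combined with the separate fact, also invoked here, that the generic type of a definable field of $U$-rank below $\omega^{\omega}$ is ESN; that fact ensures that the field-interpreting groups are harmless, so that only the \emph{big-bad} simple groups — which by hypothesis still have ESN generics — can possibly stand in the way of the construction.
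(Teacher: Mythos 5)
Your outer frame agrees with the paper: reduce to Proposition \ref{Prop1}, assume for contradiction that from some $N$ on no finite-domain regular type of rank $\omega^m$ is ESN, manufacture a definable simple group or field of rank below $\omega^{\omega}$, and then use the ESN hypothesis (plus the field lemma) to transfer eventual strong non-isolation back to a regular type of rank $\omega^m$, contradicting the choice of $N$. But your middle step --- the one you yourself flag as ``the main obstacle'' --- is a genuine gap, and the mechanism you propose for it would fail. You argue that the failure of ESN forces the geometry of a rank-$\omega^n$ regular type to be non-locally-modular, and that ``Hrushovski's theorem on definable groups then produces an infinite group acting on it.'' That theorem runs in the \emph{opposite} direction: Hrushovski's group-existence result attaches a definable (abelian) group to a non-trivial \emph{locally modular} regular type; for non-locally-modular types there is no general group-existence theorem at all (no group configuration is available, and there exist non-locally-modular strongly minimal geometries interpreting no infinite group). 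Moreover, the results of \cite{T} quoted in the introduction are about globally trivial or one-based \emph{theories}; no type-by-type statement of the form ``locally modular geometry implies ESN'' is established there, so even your reduction to the non-locally-modular case is unsupported.

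The paper fills this gap by a completely different route, in which the group comes from internality rather than from the pregeometry. Failure of ESN is converted, via Theorem \ref{Thm2}, into nonorthogonality to a NENI type $p'$ of rank $\omega^{n+1}$; NENI types are isolated and \emph{internally isolated}, which makes the closure operator $\cl_{p}$ relatively definable (Lemma \ref{new1}) and yields the definable Grassmannians $p_{(n)}$ and localizations $p_c$. Proposition \ref{Prop2} then shows that a primitive NENI type of rank $\omega^{n+1}$ is \emph{controlled} by a regular type $q$ of rank $\omega^n$, and the definable group is the binding group $G_c$ of $p_c$ over $q$ --- type-definable precisely because $p_c$ is $q$-internal. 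Transitivity of $G_c$ on the locus of $(p_c)^n$ together with strict primitivity gives $2$-transitivity, hence a centerless $q$-connected group, and the Zilber indecomposability analysis in Proposition \ref{Lem6} produces either a $q$-connected simple group or a $q$-connected field; Lemma \ref{Lem4} and the hypothesis then make $q$ ESN, giving the contradiction at rank $\omega^n$ (not at the rank of the group's generic, as in your sketch). Note also that your ``divide out the solvable radical to get a simple group'' elides the same dichotomy: the abelian layers cannot be discarded --- they are exactly the case in which one lands on a field, via a minimal $G$-invariant subgroup of $Z(G')$ and a faithful action of an abelian quotient. So your skeleton is sound, but the engine you propose for the group-existence step is wrong as stated, and replacing it requires the internal-isolation/control/binding-group machinery that constitutes the actual content of the paper.
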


Theorem \ref{Thm1} is a simplified and corrected version of the corresponding result from
the author's PhD Thesis \cite{T1}.

\section{Preliminaries}

We will  assume that the reader is familiar with basic stability
theory and stable group theory, references are \cite{Ba},
\cite{M}, \cite{Pil}, \cite{Poizat} and \cite{Wag}. Throughout the
paper we will assume $T=T^{eq}$ and operate in the monster model
$\bar M$ of $T$. The notation is standard. A regular type is
assumed to be stationary. For any regular type $p\in S(A)$ and any
$B$ by $\cl_p(B)$ we will denote the set of elements realizing a
forking extension of $p$ over $AB$. This is a pregeometry operator
on the locus of $p$. If $p,q$ are possibly incomplete types then
$p$ is {\em $q$-internal} if whenever $M$ is $\aleph_1$-saturated
and contains $\dom(p)\cup\dom(q)$ then for any $a$ realizing $p$
there is a tuple $\bar b$ of realizations of $q$ such that
$a\in\dcl(\bar bM)$. The binding group is the group of all
automorphisms of $p(\bar M)$ fixing pointwise
$\dom(p)\cup\dom(q)\cup q(\bar M)$; if $p,q$ are stationary and
$p$ is $q$-internal then the binding group is type-definable.

\smallskip
$p\in S(A)$ is {\em semiregular} if it is stationary and there is
a regular $q$ such that $p$ is $q$-simple and
domination-equivalent to a power of $q$, in which case we also say
that $p$ is {\em $q$-semiregular}. If $\tp(\bar a/A)\nor q$ then
there is $b\in \acl(\bar aA)\setminus \acl(A)$ such that
$\stp(b/A)$ is $q$-internal; if in addition $\tp(\bar a/A)$ is
stationary, then such a $b$ can be found in $\dcl(\bar
aA)\smallsetminus\dcl(A)$.    Moreover, if $\omega^{\alpha}\cdot
n$ is the lowest monomial term in the Cantor normal form of
$U(\bar a/A)$ then there is  a regular  $q\nor \tp(\bar a/A)$   of
$U$-rank $\omega^{\alpha}$ and   for the corresponding  $b$ we
have $U(b/A)=\omega^{\alpha}\cdot m$ where  $m\leq n$; in
particular $\stp(b/A)$ is $q$-semiregular and $q$-internal. In
this article we will often deal with types  which are
$q$-semiregular and $q$-internal for some regular  $q$ of $U$-rank
$\omega^{\alpha}$; their $U$-rank  is the  monomial
$\omega^{\alpha}\cdot m$  where $m=\wt_q(b/A)$, and  any extension
of such a type of $U$-rank at least $\omega^{\alpha}$ is $\nor q$.

\smallskip Recall that $p\in S(A)$ is {\em eni}, or eventually
non-isolated, iff there is a finite set $B$ and a non-isolated,
nonforking extension of $p$ in $S(AB)$. $p$ is ENI if it is
strongly regular and eni; $p$ is NENI if it is strongly regular
and is not eni   (this slightly differs from the original
definition from \cite{SHM} in that we allow a NENI type to have
infinite domain).

\smallskip Next we recall the notion of strong  non-isolation from
\cite{T}. Let $p\in S(A)$ be non-algebraic. $p$ is \emph{strongly
nonisolated} if for all $n$ and all finite $B$\,\,  \begin{center}
$\{q\in S_{n}(AB)\,\,|\,\,q\,\aor\, p\}$\,\,\, is dense in\,
$S_{n}(AB);$\end{center} here $p\aor q$ denotes almost
orthogonality:  any pair of realizations of $p\,|\,AB$ and $q$ is
independent over $AB$. Note that a strongly non-isolated type  is
almost orthogonal to all isolated types; in particular, it is
non-isolated. Moreover, if $T$ is  small (i.e.
$|S(\emptyset)|=\aleph_0$) then   isolated types are dense in
$S_n(A)$  for any finite $A$ and    strong non-isolation of $p\in
S(A)$ is equivalent to: $p$ is  almost orthogonal  to any isolated
type over a slightly larger domain. $p$ is \emph{eventually
strongly nonisolated}, or ESN for short, if there is a finite $B$
and a nonforking extension $q\in S(AB)$ which is strongly
nonisolated.  By Theorem 1 from \cite{T} we have:

\begin{thm}\label{Thm2} ($T$ countable superstable) \  $p\in S(A)$
is ESN if and only if it is orthogonal to all NENI types whose
domain is a finite   extension of $A$  in $\bar{M}^{eq}$.
\end{thm}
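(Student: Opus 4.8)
The plan is to reduce the two-sided equivalence to a statement about strongly regular types, exploiting that in a small theory isolated types are dense, so that $q\in S(AB)$ is strongly nonisolated if and only if $q$ is almost orthogonal to every isolated type over every finite extension of $AB$ (one direction is the Remark already recorded: density forces $q\aor t$ at each isolated point $t$; the converse is just density of the isolated types). I will therefore show that $p$ is ESN iff $p\perp t$ for every isolated $t$ whose domain is finite over $A$, and then match this against orthogonality to all NENI types via the dictionary: for a strongly regular $r$, $r$ is eni iff $r\perp t$ for every isolated $t$, equivalently $r$ is NENI iff $r\nor t$ for some isolated $t$.

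For the forward direction, suppose $q=p\,|\,AB$ is strongly nonisolated and let $r$ be NENI with $\dom(r)=D$ finite over $A$; I want $p\perp r$. Assume instead $p\nor r$, hence $q\nor r$ since orthogonality is preserved under nonforking extensions of stationary types. Because $r$ is regular, the structural fact recalled in the Preliminaries (non-orthogonality to a regular type produces, inside $\acl(\bar a\,AB)$ for $\bar a\models q$, an element whose strong type is $r$-internal) lets me witness, over some $C$ finite over $AB$, that $q|C$ and $r|C$ are not almost orthogonal; enlarging, take $AB\cup D\subseteq C$ with $C$ finite over $A$. Since $r$ is not eni and $C$ is a finite extension of $D$, the nonforking extension $r|C$ is isolated. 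But $q$ strongly nonisolated gives $q|C\aor t$ for every isolated $t\in S(C)$, in particular $q|C\aor r|C$, contradicting the non-almost-orthogonality just obtained. Hence $p\perp r$.

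For the converse, assume $p\perp r$ for every NENI $r$ over a finite extension of $A$; I claim $p$ itself is strongly nonisolated, so $p$ is ESN with $B=\emptyset$. By the characterization above it suffices to prove $p\perp t$ for every isolated $t$ over a finite extension of $A$. Suppose not: $p\nor t$ for some such isolated $t$. Non-orthogonality routes through a common regular type, and every regular type is non-orthogonal, over a finite extension of its domain, to a strongly regular one; so I obtain a strongly regular $r$, with $\dom(r)$ finite over $A$, satisfying $r\nor p$ and $r\nor t$. Since $r\nor t$ with $t$ isolated, the dictionary yields that $r$ is NENI, and then $r\nor p$ contradicts the hypothesis $p\perp r$. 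Therefore $p\perp t$ for all isolated $t$, as required.

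The main obstacle is the dictionary lemma, specifically the implication that a strongly regular $r$ non-orthogonal to an isolated type is not eni. The easy half is immediate: if $r\perp t$ for every isolated $t$ then no nonforking extension $r|C$ over finite $C$ can be isolated, since $r\nor r|C$ would make $r$ non-orthogonal to an isolated type; thus $r$ is (strongly) eni. For the hard half I plan to transfer isolation along the non-orthogonality: given $r\nor t$ with $t$ isolated over $C$, use regularity together with the strong-regularity formula of $r$ to place a realization $a$ of a nonforking extension of $r$ into $\acl$ of finitely many independent realizations $\bar c$ of $t$ over $C$; since $\tp(\bar c/C)$ is isolated and $a\in\acl(C\bar c)$, the type $\tp(a\,\bar c/C)$, hence the induced nonforking extension of $r$, is isolated, after which the strong-regularity formula should propagate this isolation to every finite nonforking extension of $r$. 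Controlling this last propagation uniformly, so that isolation over one convenient finite set forces isolation over all finite sets, is the delicate point and the part most dependent on the smallness of $T$.
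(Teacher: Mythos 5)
Your forward direction is essentially sound: since a NENI type $r$ is by definition not eni, \emph{every} nonforking extension of $r$ over a finite extension of $\dom(r)$ is isolated, and in a superstable theory non-orthogonality between $p\,|\,AB$ and $r$ is indeed witnessed over some $C$ finite over $AB\cup\dom(r)$; the isolated type $r\,|\,C$ then contradicts strong non-isolation of $p\,|\,AB$ (note that this half needs no smallness: an isolated point of $S_n(C)$ must belong to any dense set, so density of the almost-orthogonal types already forces $p\,|\,C\,\aor\,r\,|\,C$). Bear in mind that the paper itself does not prove this statement but imports it as Theorem 1 of [T], so the comparison is with that proof; in any case, your converse cannot be repaired as written.

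The converse rests on a false dictionary. It is not true that a strongly regular type non-orthogonal to an isolated type is NENI, and your intermediate target ``$p$ is ESN iff $p\perp t$ for every isolated $t$ over a finite extension of $A$'' is also false. Take $T=\Th(\mathbb{Z},s)$ with $s$ the successor function: $T$ is strongly minimal, the unique type $t\in S_1(\emptyset)$ is isolated and strongly regular, yet $t\,|\,\{b\}$ is non-isolated (no formula over $b$ excludes the whole orbit of $b$), so $t$ is ENI, not NENI --- even though $t\nor t$ and $t$ is isolated. This kills the ``hard half'' of your dictionary, and with it the converse: your argument would declare every such $t$ NENI. The same example refutes the intermediate equivalence in both directions: $t$ is ESN (over any $\{b\}$ the isolated types are exactly the algebraic ones, which are dense and almost orthogonal to the generic), yet $t$ is non-orthogonal to the isolated type $t$ itself; and $t$ is \emph{not} strongly nonisolated over $\emptyset$ (the only isolated point of $S_1(\emptyset)$ is $t$, which is not $\aor\,t$), so your plan to conclude ESN ``with $B=\emptyset$'' fails --- the ``eventually'' in ESN is not dispensable. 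The root cause is that isolation is not preserved under nonforking extensions; that failure \emph{is} the eni phenomenon, so non-orthogonality to a single isolated type can never certify that all finite nonforking extensions of $r$ are isolated, and your proposed ``propagation'' of isolation along the strong-regularity formula is exactly the step with no proof (your background assumption that every regular type is non-orthogonal to a strongly regular one is also unjustified in merely superstable theories). The correct converse must instead \emph{extract} a NENI type from the failure of strong non-isolation by a minimality-type argument over carefully chosen isolated witnesses, as in [T], rather than upgrade an arbitrary isolated non-orthogonal type to NENI.
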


This is a strong dichotomy especially for regular   types over
finite domains: such a type is either ESN or is $\nor$ to a NENI
type; if $T$ is $\aleph_0$-stable, then it coincides with the
ENI-NENI dichotomy. A consequence of the theorem is that  the
property of being ESN is preserved under non-orthogonality, for
regular types whose domains differ on a finite set. Also, by
Proposition 2.1 from \cite{T}, a type $p\in S(A)$ is ESN if and
only if each of its regular components is ESN, assuming that the
domain of each component differs from $A$ on a finite set.

\smallskip
The next fact is an instance of Proposition 5.1 from \cite{T}:

\begin{prop}\label{Prop1}
Suppose that there exists an infinite $I\subseteq\omega$ and a
family $\{p_{n}|n\in I\}$ of regular, ESN types over finite
domains  such that $U(p_{n})=\omega^n$ for all $n\in I$. Then \
$I(T,\aleph_{0})=2^{\aleph_{0}}$.
\end{prop}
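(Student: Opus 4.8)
The plan is to construct $2^{\aleph_{0}}$ pairwise non-isomorphic countable models of $T$, one model $M_{S}$ for each subset $S\subseteq I$, the model $M_{S}$ being distinguished by the data of which of the $p_{n}$ it realizes. The first observation is that the family $\{p_{n}\mid n\in I\}$ is pairwise orthogonal: non-orthogonal regular types are domination-equivalent and hence have equal $U$-rank, so since the ranks $U(p_{n})=\omega^{n}$ are pairwise distinct we get $p_{n}\perp p_{m}$ whenever $n\neq m$. For each $n$ fix, using the ESN hypothesis, a finite set $B_{n}\supseteq\dom(p_{n})$ and a nonforking extension $p_{n}'\in S(B_{n})$ that is strongly non-isolated; in every model I construct I will arrange $B_{n}\subseteq M_{S}$.

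Given $S\subseteq I$, I build $M_{S}$ by a Henkin construction that simultaneously witnesses some types and omits others. For $n\in S$ I add Henkin witnesses realizing $p_{n}'$, so that $M_{S}$ realizes $p_{n}'$; for $n\notin S$ I omit every $\emptyset$-conjugate of $p_{n}'$. There are only countably many conjugates of the various $p_{n}'$ occurring over finite tuples of the (countable) model, and each is strongly non-isolated, since strong non-isolation is preserved under conjugacy; hence each is non-isolated and eligible to be omitted. The point making the witnessing and the omitting compatible is that strong non-isolation is robust under finite extensions of the domain: by definition the types almost orthogonal to $p_{n}'$ are dense over $B_{n}C$ for every finite $C$, so $p_{n}'$ (and each conjugate) remains non-isolated over every finite parameter set appearing in the construction, even after all the witnesses for the realized types have been added. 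Moreover, pairwise orthogonality guarantees that realizing $p_{n}'$ for $n\in S$ never forces a realization of a conjugate of $p_{m}'$ for $m\notin S$: a realization of $p_{n}'$ is orthogonal to $p_{m}$, and by strong non-isolation no conjugate of $p_{m}'$ can become isolated over the finitely many new parameters. Thus the omitting requirement can be met at every finite stage, and the omitting-types theorem for countable languages yields a countable $M_{S}\models T$ realizing $p_{n}'$ exactly for $n\in S$.

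Finally I verify non-isomorphism. Suppose $S\neq S'$ and choose $n\in S\setminus S'$. Any isomorphism $f\colon M_{S}\to M_{S'}$ extends to an automorphism of the monster model fixing $\emptyset$, and hence carries realizations of $p_{n}'$ to realizations of some $\emptyset$-conjugate of $p_{n}'$. But $M_{S}$ realizes $p_{n}'$ while $M_{S'}$ realizes no conjugate of $p_{n}'$ at all, so no such $f$ can exist; therefore $M_{S}\not\cong M_{S'}$. Consequently $\{M_{S}\mid S\subseteq I\}$ is a family of $2^{|I|}=2^{\aleph_{0}}$ pairwise non-isomorphic countable models, giving $I(T,\aleph_{0})=2^{\aleph_{0}}$. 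I expect the main obstacle to lie in the combined witness-and-omit step: one must check carefully that, throughout the transfinite-free but infinite stagewise construction, realizing the chosen types never isolates any conjugate of an omitted $p_{n}'$ over a finite parameter set. This is exactly where the full strength of strong non-isolation is needed, namely almost orthogonality to \emph{all} isolated types over \emph{every} finite extension of the domain, used in tandem with pairwise orthogonality to keep the choices for distinct indices $n$ logically independent.
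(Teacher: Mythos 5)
Your overall architecture --- building, for each $S\subseteq I$, a countable model $M_S$ that realizes the strongly non-isolated nonforking extensions $p'_n$ for $n\in S$ while omitting every $\emptyset$-conjugate of $p'_m$ for $m\notin S$, and separating the models via the pairwise orthogonality of the $p_n$'s --- is the right one; the orthogonality observation and the final non-isomorphism argument are correct (modulo a reduction you should state explicitly: if $T$ is not small then $I(T,\aleph_0)=2^{\aleph_0}$ trivially, and smallness is needed for the density of isolated types your construction implicitly uses). Note also that the paper does not prove this proposition at all: it quotes it as an instance of Proposition 5.1 of \cite{T}, so what you are attempting is in effect the content of that external result.

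The genuine gap is in the combined witness-and-omit step, exactly where you assert that $p'_m$ ``remains non-isolated over every finite parameter set appearing in the construction.'' What an omitting-types construction needs is much more than non-isolation of $p'_m$ over $B_m$: it needs that at no stage is there a consistent formula $\theta(x,\bar b)$, with $\bar b$ a finite tuple from the model under construction (including Henkin witnesses and the chosen realizations of $p'_n$, $n\in S$), that \emph{implies} a conjugate $f(p'_m)$ --- i.e.\ that no tuple of $M_S$ supports (semi-isolates) such a conjugate; once such a $\bar b$ is in the model, Henkin completeness forces a realization and the omission fails. Pairwise orthogonality does not give this: orthogonality controls forking, not isolation, and nothing in $p_n\bot p_m$ prevents a formula over a realization of $p'_n$ from implying a conjugate of $p'_m$. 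Nor does the density of $\{q\,:\,q\aor p'_m\}$ by itself: an isolated type over a finite extension of the domain, which by density must be $\aor p'_m$, can still be a \emph{forking} extension of $p'_m$ --- for a regular type, forking extensions are automatically almost orthogonal to it, so the self-dependence argument that rules out isolated nonforking extensions says nothing here --- and a formula isolating such an extension would support $p'_m$ and derail the omission. Moreover, the dense sets you invoke live over different bases (one for each conjugate copy $\bar b'$ of $B_m$ in the model and each $m\notin S$) and are dense but not known to be open, so a single Henkin chain cannot simply meet countably many of them ``in tandem'': one must commit at each stage to a complete type of the finite tuple built so far and show that this commitment survives when later requirements over other bases are treated. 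Overcoming precisely these two obstacles --- support by tuples involving the realized types, and simultaneous satisfaction of density requirements over different bases --- is the substance of the proof of Proposition 5.1 in \cite{T}, which organizes the construction around the fact that a strongly non-isolated type is almost orthogonal to every isolated type over every finite extension of its domain; your sketch names the obstacle but supplies no argument for it, so the proof is incomplete at its central step.
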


\section{Internally isolated types}

The notion of internal isolation for types was introduced in
\cite{T} in order to approximate certain definability property of
forking on the locus of a  NENI type: If $A$ is finite and $p\in
S(A)$ is NENI then any nonforking extension of $p$ over a finitely
extended domain is isolated; by induction, it is not hard to prove
that $p^n$ is isolated for all $n$.

\begin{dfn}\label{def1} A stationary type
$p\in S(A)$ is  internally isolated if for each $n\in \mathbb{N}$
there exists a formula $\phi_{n}(x_{1}, x_{2},...,x_{n})$ over $A$
such that:
\begin{center}
$(p(x_{1})\wedge p(x_{2})\wedge ...\wedge p(x_{n})\wedge
\phi_{n}(x_{1}, x_{2},...,x_{n}))\Leftrightarrow p^{n}(x_{1},
x_{2},...,x_{n})$.
\end{center}
\end{dfn}

\noindent Another way to describe internal isolation of $p\in
S(A)$ is the following: \begin{center} for all $n$  the locus of
$p^n(\bar M)$ is a relatively $A$-definable subset of $p(\bar
M)^n$;
\end{center}
where a subset of a type-definable over $A$ set $C$ is {\em
relatively $A$-definable} if  it is the intersection of $C$  and
an $A$-definable set. Here we also note that, by Lemma 1.2 from
\cite{T}, a complete type is NENI if and only if it is regular,
isolated and internally isolated.

\smallskip
In the next lemma we prove that  internal isolation of a regular
type $p$ has a strong consequence: relative definability of
$\cl_p$ within $p(\bar M)^{n}$.

\begin{lem}\label{new1}
Suppose that $p\in S(A)$ is regular and internally isolated. Then
  \begin{center} $\{(a,b_1,...,b_n)\in p(\bar
M)^{n+1}\,|\,a\in\cl_p(\bar b)\}$\end{center} is a relatively
$A$-definable subset of $p(\bar M)^{n+1}$ for all $n$
\end{lem}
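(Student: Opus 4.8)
The plan is to express membership in $\cl_p$ through pregeometry dimension, and to recover relative definability of that dimension directly from internal isolation. Write $\dim(\bar c/A)$ for the rank of a tuple $\bar c\in p(\bar M)^m$ in the pregeometry $\cl_p$, i.e.\ the size of a maximal $A$-independent subtuple; note $0\le\dim(\bar c/A)\le m$. Since $p$ is non-algebraic, a single $a\models p$ has $\dim(a/A)=1$, so for any $\bar b\in p(\bar M)^n$ one has $\dim(a\bar b/A)\in\{\dim(\bar b/A),\,\dim(\bar b/A)+1\}$, and the first alternative holds exactly when $a\,\dep_A\,\bar b$. As $a\models p$, the relation $a\,\dep_A\,\bar b$ is precisely $a\in\cl_p(\bar b)$. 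Thus the set in question is
\[
R_{n+1}=\{(a,\bar b)\in p(\bar M)^{n+1}\mid \dim(a\bar b/A)=\dim(\bar b/A)\},
\]
and it suffices to show that for each $m$ and each $k\le m$ the condition $\dim(\bar c/A)=k$ on $\bar c\in p(\bar M)^m$ is relatively $A$-definable.

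First I would handle the threshold sets. A tuple $\bar c\in p(\bar M)^m$ satisfies $\dim(\bar c/A)\ge k$ exactly when some $k$ of its coordinates are mutually $A$-independent, i.e.\ when some length-$k$ subtuple $\bar c_I$ (with $I\subseteq\{1,\dots,m\}$, $|I|=k$) realizes $p^{k}$. By internal isolation, for coordinates all realizing $p$ this last condition is equivalent to $\phi_k(\bar c_I)$; here $\phi_k$ automatically forces the coordinates of $\bar c_I$ to be distinct, since independent realizations of a non-algebraic type are distinct. Hence, within $p(\bar M)^m$, the set $\{\dim(\bar c/A)\ge k\}$ is defined by the $A$-formula $\psi_{m,k}(\bar x):=\bigvee_{|I|=k}\phi_k(\bar x_I)$, with the conventions that $\psi_{m,0}$ is $\top$ and $\psi_{m,k}$ is $\bot$ for $k>m$. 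Consequently $\{\dim(\bar c/A)=k\}$ is defined by $\psi_{m,k}\wedge\neg\psi_{m,k+1}$ relative to $p(\bar M)^m$.

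Finally I would assemble the pieces. Using the threshold formulas for $m=n$ (on the variables $\bar y$) and for $m=n+1$ (on the variables $x,\bar y$), the relation $\dim(a\bar b/A)=\dim(\bar b/A)$ is captured by the finite disjunction
\[
\chi(x,\bar y):=\bigvee_{k=0}^{n}\bigl(\psi_{n,k}(\bar y)\wedge\neg\psi_{n,k+1}(\bar y)\wedge\psi_{n+1,k}(x,\bar y)\wedge\neg\psi_{n+1,k+1}(x,\bar y)\bigr),
\]
which is an $A$-formula. Intersecting its solution set with $p(\bar M)^{n+1}$ yields exactly $R_{n+1}$, so $R_{n+1}$ is relatively $A$-definable, as required.

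I expect no deep obstacle here; the one step that needs care is the reduction of forking dependence $a\,\dep_A\,\bar b$ to the equality $\dim(a\bar b/A)=\dim(\bar b/A)$ of bounded dimensions, together with the observation that each such dimension is read off from independence of fixed-length subtuples --- which is exactly what internal isolation makes relatively definable. Because the tuple lengths $n$ and $n+1$ are fixed, all the disjunctions over subsets $I$ are finite and no unbounded quantification enters, so relative $A$-definability is preserved throughout the Boolean combination.
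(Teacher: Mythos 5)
Your proof is correct and rests on the same mechanism as the paper's own argument: internal isolation makes the condition ``$\bar c_I$ realizes $p^k$'' relatively $A$-definable via $\phi_k$, and membership in $\cl_p$ is then detected by comparing maximal independent subtuples of $\bar b$ and of $a\bar b$. The only difference is presentational --- the paper establishes that the corresponding set of types is clopen in the space $S$ of completions of $p(x)\cup p(y_1)\cup\dots\cup p(y_n)$ (exhibiting, for each type, a witnessing formula of exactly your threshold form, and implicitly using compactness to extract a single defining formula), whereas you assemble the explicit formula $\chi$ directly as a finite Boolean combination of the $\psi_{m,k}$.
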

\begin{proof} In order to simplify notation we will assume that $p\in S_1(A)$.
Fix $n$ and let $S\subset S_{n+1}(A)$ be the set of all
completions of $p(x)\cup p(y_1)\cup...\cup p(y_n)$. We will  prove
that \ $C= \{\tp(a\bar b/A)\in S\,|\,a\in\cl_p(\bar b)\}$ \ is
clopen in $S$.

\smallskip Suppose that $\tp(a\bar b/A)\in C$. Then there is an
independent over $A$ set $B\subset \bar b$ such that
$a\in\cl_p(B)$; without loss of generality we will assume that
$B=b_1...b_m$.  Note that $b_1...b_m\models p^m$ and that
$ab_1...b_m$ does not realize $p^{m+1}$. Consider the formula:
\begin{center}
$\neg\phi_{m+1}(x,y_1,...,y_m)\wedge \phi_{m}(y_1,...,y_m)$
\end{center}
(where $\phi_i$'s are given by Definition \ref{def1}). It belongs
to $\tp(a\bar b/A)$ and whenever   $\tp(a'\bar b'/A)\in S$
contains the formula then $a'\in \cl_p(\bar b')$. Therefore $C$ is
open is $S$.

To  prove that $S\smallsetminus C$ is open in $S$  suppose that
$\tp(c\bar b/A)\in S\smallsetminus C$. Then $c\models p\,|\,A\bar
b$. Choose a maximal independent subset of $\bar b$ over $A$;
without loss of generality suppose that $\{b_1,...,b_k\}$ is
chosen.  Consider the formula
\begin{center}
$\phi_{k+1}(x,y_1,...,y_k)\wedge
\bigwedge_{i=k+1}^n\neg\phi_{k+1}(y_i,y_1,...,y_k)\,.$
\end{center}
Clearly  it belongs to $\tp(a\bar b/A)$, and whenever $\tp(a'\bar
b'/A)\in S$ contains the formula then $a'b_1'...b_k'\models
p^{k+1}$ and $b_i'\in\cl_p(b_1'...b_k')$ holds for all $k<i\leq
n$. Combining the two we derive $a'\notin\cl_p(\bar b')$, so our
formula witnesses that $S\smallsetminus C$ is open in $S$. This
completes the proof of the lemma.
\end{proof}

As an immediate corollary we obtain:

\begin{cor}Suppose that $p\in S(A)$ is regular and internally
isolated. Then $\cl_p(\bar y)=\cl_p(\bar z)$ is a relatively
$A$-definable equivalence relation on $p(\bar M)^n$ for all $n$.
\end{cor}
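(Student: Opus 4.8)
The plan is to reduce equality of the two closures to finitely many membership conditions, each of which is delivered directly by Lemma \ref{new1}. First I would record the two standard pregeometry facts that make this reduction work. Since $\cl_p$ is a monotone and idempotent closure operator, for tuples $\bar y,\bar z\in p(\bar M)^n$ one has $\cl_p(\bar y)\subseteq\cl_p(\bar z)$ if and only if $y_i\in\cl_p(\bar z)$ for every $i\le n$: if each $y_i\in\cl_p(\bar z)$ then $\cl_p(\bar y)\subseteq\cl_p(\cl_p(\bar z))=\cl_p(\bar z)$, and conversely $y_i\in\cl_p(\bar y)\subseteq\cl_p(\bar z)$. Consequently the relation $\cl_p(\bar y)=\cl_p(\bar z)$ is equivalent to the conjunction
\[
\bigwedge_{i=1}^{n} \bigl(y_i\in\cl_p(\bar z)\bigr)\ \wedge\ \bigwedge_{j=1}^{n}\bigl(z_j\in\cl_p(\bar y)\bigr).
\]

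Next I would invoke Lemma \ref{new1}. Applying it with $a=y_i$ and $\bar b=\bar z$, the set $\{(y_i,z_1,\dots,z_n)\in p(\bar M)^{n+1}\mid y_i\in\cl_p(\bar z)\}$ is a relatively $A$-definable subset of $p(\bar M)^{n+1}$; pulling this condition back along the projection onto the relevant coordinates shows that each conjunct in the display is a relatively $A$-definable subset of $p(\bar M)^n\times p(\bar M)^n$. A finite Boolean combination of relatively $A$-definable subsets of a fixed type-definable set is again relatively $A$-definable, so the whole conjunction defines a relatively $A$-definable subset of $p(\bar M)^n\times p(\bar M)^n$, which is exactly the relation $\cl_p(\bar y)=\cl_p(\bar z)$.

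Finally, that this relation is an equivalence relation requires no model theory at all: it is literal equality of the sets $\cl_p(\bar y)$ and $\cl_p(\bar z)$, so reflexivity, symmetry, and transitivity are immediate. I do not expect a genuine obstacle here, since the statement is essentially a repackaging of the lemma; the only point deserving a moment's care is the pregeometry reduction in the first step, namely that coordinatewise membership captures set inclusion, which is precisely where idempotence of $\cl_p$ is used.
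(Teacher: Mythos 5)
Your proof is correct and follows exactly the route the paper intends: the paper states this corollary as an immediate consequence of Lemma \ref{new1}, and your reduction of $\cl_p(\bar y)=\cl_p(\bar z)$ to the finite conjunction of coordinatewise memberships (using monotonicity and idempotence of the pregeometry operator $\cl_p$ on $p(\bar M)$), each relatively $A$-definable by the lemma, is precisely the routine verification being left to the reader. Nothing is missing; your explicit note that idempotence is what makes coordinatewise membership capture set inclusion is the right point of care.
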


By compactness, it follows that for  any  regular, internally
isolated type $p\in S(A)$ there exists a formula  over $A$
defining an equivalence relation on the  whole of $\bar M^n$ and
relatively defining $\cl_p(\bar y)=\cl_p(\bar z)$ within $p(\bar
M)^n\times p(\bar M)^n$.

\begin{dfn}Suppose that $p\in S(A)$ is regular and internally
isolated.

\smallskip(1) \ $E^p_n(\bar y,\bar z)$ is a formula defining an
equivalence relation on the  whole of $M^n$ and relatively
defining $\cl_p(\bar y)=\cl_p(\bar z)$ on $p(\bar M)^n$.

\smallskip (2) \    $p_{(n)}=p^n/E^p_n$.
\end{dfn}

Throughout the paper whenever the meaning of $p$ is clear from the
context then we will simply write $E_n$ instead of $E^p_n$.
Further, note that  $p_{(n)}$ is a complete type over $A$, we will
refer to it as to the type of the name of an $n$-dimensional
$p$-subspace (Grassmannian).

\begin{rmk}\label{Rmk0}
Suppose that  $p\in S(A)$ is regular and internally isolated. Let
$\bar a\models p^n$ and let $c=\bar a/E_n$.

\smallskip (i)  There is a unique type over $cA$ of an $n$-tuple of  members
of $\cl_p(\bar a)$ which are independent over $A$. In other words,
if $\bar b\models p^n$ and $\bar b\subseteq \cl_p(\bar a)$ then
$\tp(\bar a/cA)=\tp(\bar b/cA)$. This holds because any
$A$-automorphism moving $\bar a$ to $\bar b$ fixes setwise
$\cl_p(\bar a)$ ($=\cl_p(\bar b)$), so it is  an
$Ac$-automorphism.

\smallskip (ii) For    $m\leq n$ any independent over $A$ $m$-tuple
is contained in an independent $n$-tuple  so, by part (i), there
is a unique type over $cA$  of an independent (over $A$) $m$-tuple
of elements of $\cl_p(\bar a)$.

\smallskip (iii)   $p$ has a unique forking
extension in $S(cA)$: applying part (ii) to the case  $m=1$ we
conclude that there is a unique extension of $p$ in $S(cA)$
consistent with $x\in\cl_p(\bar a)$; it is a forking extension
because the nonforking extension clearly satisfies $x\ind \bar
a\,(A)$.

\smallskip
(iv) The uniqueness of   forking extension   implies that
$\cl_p(\bar a)=\cl_p(c)$ holds.
\end{rmk}

\begin{dfn} Suppose that  $p\in S(A)$ is regular and
internally isolated, and that $c\models p_{(n)}$. By $p_c$ we will
denote the unique forking extension of $p$ in $S_1(cA)$.
\end{dfn}

Thus $p_c$ is the type of an element of  the subspace  $c$. Next
we recall   Definition 2 from  \cite{T}: a regular type $p\in
S(A)$ is {\em strictly regular} if   whenever $a_1,a_2\models
p$ then either $a_1=a_2$ or $a_1\ind a_2\,(A)$ holds.

\begin{lem}\label{Rmk01} Suppose that  $p\in S(A)$ is regular and
internally isolated.
\begin{enumerate}[(i)]
\item $p_{(1)}$ is strictly regular :

\smallskip\item $U(p_{(1)})=\omega^{\alpha}$ where  $\alpha$ is the
smallest power of a monomial in the Cantor normal form of
$U(p)$.\end{enumerate}
\end{lem}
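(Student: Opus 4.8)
The plan is to treat the two clauses in turn, deriving the rank computation in (ii) from the coordinatisation of $p$ by its bottom $q$-part together with the additive indecomposability of $\omega^\alpha$, and obtaining (i) from the fact that forking among the Grassmannian elements faithfully reflects the pregeometry $\cl_p$.

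For clause (i) I would first record the \emph{strict} inequality and then regularity. If $c_1,c_2\models p_{(1)}$ have representatives $a_1,a_2\models p$ and $c_1\neq c_2$, then $\cl_p(a_1)\neq\cl_p(a_2)$; since in the pregeometry $\cl_p$ two realisations with distinct closures are independent, $a_1\ind a_2\,(A)$, and as $c_i\in\dcl(a_iA)$ this yields $c_1\ind c_2\,(A)$. Thus distinct realisations of $p_{(1)}$ are independent, which is exactly the defining condition of strict regularity once $p_{(1)}$ is known to be regular. To see regularity I would show that for $c_0,c_1,\dots,c_n\models p_{(1)}$ with representatives $a_0,\bar a$ one has $c_0\ind c_1\dots c_n\,(A)\iff a_0\notin\cl_p(\bar a)$. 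The implication $\Leftarrow$ is immediate from $c_0\in\dcl(a_0A)$ and $\bar c\in\dcl(\bar aA)$; for $\Rightarrow$ I would pass to $e=\bar a/E_n\in\dcl(\bar cA)$, the name of the flat $\cl_p(\bar a)$, use Remark \ref{Rmk0} to see that $a_0\in\cl_p(e)$ realises the unique forking extension of $p$ over $eA$, and then observe that the sub-flat $c_0\subseteq\cl_p(e)$ cannot be independent from $e$. Consequently $\cl_{p_{(1)}}$ inherits exchange from $\cl_p$, so $p_{(1)}$ is regular, hence strictly regular.

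For clause (ii) I would first bound $U(p_{(1)})$ from above. Applying the coordinatisation fact from the preliminaries to $\bar a=a\models p$, with $\omega^\alpha$ the least monomial of $U(p)$, yields a regular $q\nor p$ with $U(q)=\omega^\alpha$ and $b\in\dcl(aA)\setminus\dcl(A)$ with $\stp(b/A)$ $q$-semiregular and $q$-internal of rank $\omega^\alpha\cdot m$. Since $p$ is regular, $\wt(a/A)=1$ and $b\in\dcl(aA)\setminus\acl(A)$ force $\wt(b/A)=1$, so $m=1$ and $U(b/A)=\omega^\alpha$. I would then check that equal $q$-parts force equal $\cl_p$-classes: writing $b=f(a)$ for an $A$-definable $f$, if $f(a)=f(a')$ with $a\ind a'\,(A)$ then the common value lies in $\dcl(aA)\cap\dcl(a'A)$ and would be independent from itself over $A$, i.e. in $\acl(A)$, contradicting $U(b/A)=\omega^\alpha>0$. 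Hence $c=a/E_1\in\dcl(bA)$ and $U(p_{(1)})=U(c/A)\le U(b/A)=\omega^\alpha$.

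For the matching lower bound, note that $c\in\dcl(aA)\setminus\acl(A)$ gives $c\nor a$, so the regular type $p_{(1)}$ lies in the non-orthogonality class of $p$; since $\omega^\alpha$ is the least monomial of $U(p)$ it is the minimal $U$-rank attained by a regular type in this class, whence $U(p_{(1)})\ge\omega^\alpha$, and with the previous paragraph $U(p_{(1)})=\omega^\alpha$. The main obstacle I anticipate is the regularity half of clause (i): transferring the forking-dependence relation between realisations of $p$ and their class-names, and in particular justifying that $a_0\in\cl_p(e)$ yields $c_0\nor e$, genuinely uses internal isolation through Lemma \ref{new1} and Remark \ref{Rmk0}; by contrast the strict inequality, the weight argument, the equal-parts computation, and the minimality input are routine applications of the cited facts.
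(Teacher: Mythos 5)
Your dichotomy argument for clause (i) and your upper bound in clause (ii) are fine (the fibre-collapse showing $c\in\dcl(bA)$, hence $U(p_{(1)})\leq\omega^{\alpha}$, is correct and close in spirit to what the paper does), but there is a genuine gap at the step ``$\cl_{p_{(1)}}$ inherits exchange from $\cl_p$, so $p_{(1)}$ is regular.'' Regularity means orthogonality to \emph{every} forking extension, over arbitrary sets $B\supseteq A$, and exchange for forking-dependence over the base $A$ alone does not imply it. A concrete counterexample: the generic type $p$ of $(\mathbb{Z}/4\mathbb{Z})^{(\omega)}$ has weight one, and forking-dependence among its realizations over $\emptyset$ is exactly the pullback under $x\mapsto 2x$ of linear dependence in the socle, so it is a pregeometry satisfying exchange; yet $p$ is non-regular, being non-orthogonal to its rank-one forking extensions $\tp(a/2a)$ (both are domination-equivalent to the socle generic). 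So your transfer lemma, although correct in itself, cannot deliver regularity. This infects your lower bound in (ii): you invoke regularity of $p_{(1)}$ together with the claim that $\omega^{\alpha}$ is the minimal $U$-rank of a regular type non-orthogonal to $p$ --- the latter is true but is not among the quoted preliminaries (they assert only the \emph{existence} of a regular $q\nor p$ of rank $\omega^{\alpha}$) and needs its own Lascar-inequality argument --- so $U(p_{(1)})\geq\omega^{\alpha}$ is not secured as written.

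The paper's route avoids both difficulties. Its proof of (i) establishes only the dichotomy (regularity is left implicit; it ultimately falls out of (ii), since $p_{(1)}$ becomes interdefinable over $A$ with an element $b$ whose strong type is $q$-semiregular of $q$-weight one, and such types are regular). For (ii) the paper takes the semiregular coordinate of $a_1\models p_{(1)}$ \emph{itself}: $b\in\dcl(a_1A)\smallsetminus\acl(A)$ with $U(b/A)=\omega^{\alpha}$, and then considers a Morley sequence $a_1,a_2$ in $\stp(a_1/bA)$. Since $a_1\dep b\,(A)$ and $a_2\dep b\,(A)$, weight one forces $a_1\dep a_2\,(A)$, and the dichotomy from (i) collapses the sequence: $a_1=a_2$, whence $a_1\in\dcl(bA)$. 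Interdefinability of $a_1$ and $b$ over $A$ gives $U(p_{(1)})=\omega^{\alpha}$ outright --- both bounds at once, with no appeal to minimality of $\omega^{\alpha}$ in the non-orthogonality class and no prior regularity of $p_{(1)}$. To repair your proposal, replace the exchange argument by this Morley-sequence collapse (your dichotomy suffices for it), and extract regularity of $p_{(1)}$ from its interdefinability with $b$ rather than from the pregeometry transfer.
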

\begin{proof}
(i) Suppose that $a_1,a_2$ realize $p_{(1)}$. Choose
$b_1,b_2\models p$ such that $b_i/E_1=a_i$. Then $a_1\dep
a_2\,(A)$ implies $b_1\dep b_2\,(A)$ and $\models E_1(b_1,b_2)$
holds. Thus $b_1/E_1=b_2/E_2$ and $a_1=a_2$.

\smallskip  (ii) Let $a_1$ realize $p_{(1)}$. Since $p$ is
regular there is $b\in \dcl(a_1 A)\smallsetminus\acl(A)$ such that
$U(b/A)=\omega^{\alpha}$. Let $a_1,a_2$ be a Morley sequence in
$\stp(a_1/bA)$. Then $a_1\dep b\,(A)$ and $a_2\dep b\,(A)$ imply,
by regularity, $a_1\dep a_2\,(A)$. By part (i) we get $a_1=a_2$.
Since $a_1,a_2$ is a Morley sequence we conclude $a_1\in \dcl(bA)$
and  $U(a_1/A)=\omega^{\alpha}$.
\end{proof}

We order  $\cup_{n\geq 1}p_{(n)}(\bar M)$  by inclusion of
$\cl_p$-closures: \ \ $c\leq c'$ \ iff \ $\cl_p(c)\subseteq
\cl_p(c')$.

\begin{lem}\label{Lem3} Suppose that  $p\in S_1(A)$ is regular and
internally isolated. If  $c\leq c'$ realize  $p_{(n)}$'s and \
$a\models p_c$\, then \ $a\ind c'(cA)$\,.
\end{lem}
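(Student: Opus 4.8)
The plan is to identify $\tp(a/c'A)$ with the nonforking extension of $p_c=\tp(a/cA)$ over $c'A$; once this is done, $a\ind c'\,(cA)$ is immediate. The two tools I would use are the uniqueness of the forking extension from Remark \ref{Rmk0}(iii) and the relative definability of $\cl_p$ furnished by Lemma \ref{new1} (through $E_n$ and the definition of $p_{(n)}$).

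First I would unpack what $a\models p_c$ gives. Since $p_c$ is the unique forking extension of $p$ over $cA$ concentrating on $\cl_p(c)$, we have $\tp(a/A)=p$ and $a\in\cl_p(c)$. Because $c\leq c'$ means $\cl_p(c)\subseteq\cl_p(c')$, the element $a$ also lies in $\cl_p(c')$; by definition of $\cl_p$ this says precisely that $\tp(a/c'A)$ is a forking extension of $p$. As it is consistent with $x\in\cl_p(c')$, Remark \ref{Rmk0}(iii) applied to $c'$ forces $\tp(a/c'A)=p_{c'}$.

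Next I would let $q\in S(c'A)$ be the nonforking extension of $p_c$ and show $q=p_{c'}$ as well. We have $q|_{A}=p$. The crucial point is that $q$ is still consistent with $x\in\cl_p(c)$: by Lemma \ref{new1} the set $\cl_p(c)\cap p(\bar M)$ is relatively definable over $cA$, so ``$x\in\cl_p(c)$'' is a formula belonging to $p_c=q|_{cA}$, hence to $q$. Thus $q$ concentrates on $\cl_p(c)\subseteq\cl_p(c')$; every realization of $p$ in this set forks over $A$, so $q$ is a forking extension, and being consistent with $x\in\cl_p(c')$ it must equal $p_{c'}$ by Remark \ref{Rmk0}(iii) applied to $c'$ once more. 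Combining with the previous paragraph, $\tp(a/c'A)=p_{c'}=q$ is nonforking over $cA$, which is exactly $a\ind c'\,(cA)$.

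The step I expect to be the main obstacle is passing from the $\Aut(\bar M/cA)$-invariance of $\cl_p(c)$ to an honest formula over $cA$: nonforking extensions preserve formulas but need not preserve arbitrary invariant conditions, so it is essential that membership in $\cl_p(c)$ be captured definably. This is precisely the content Lemma \ref{new1} was set up to deliver, and I would spell out the minor bookkeeping of expressing $\cl_p(c)$ from $\cl_p(\bar b)$ for a basis $\bar b$ of $\cl_p(c)$ via the quotient by $E_n$ in $T^{eq}$, using Remark \ref{Rmk0}(i) to see that the resulting condition depends only on the $E_n$-class $c$.
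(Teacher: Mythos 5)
Your overall plan---identify $\tp(a/cc'A)$ with the nonforking extension of $p_c$---could be made to work, but as written there is a genuine gap, and it sits exactly at the point the lemma is about. The type $q$ of your second paragraph is ill-defined: you declare $q\in S(c'A)$ to be ``the nonforking extension of $p_c$'' and write $p_c=q\,|\,cA$, but $p_c$ is a type over $cA$ and $c\notin\dcl(c'A)$ whenever $\cl_p(c)$ is a proper subspace of $\cl_p(c')$ (say $c\models p_{(m)}$, $c'\models p_{(n)}$, $m<n$: by Remark \ref{Rmk0}(ii) all names of $m$-dimensional subspaces of $\cl_p(c')$ have the same type over $c'A$, and distinct $m$-element subsets of a basis name distinct subspaces, so $c$ is not even algebraic over $c'A$; in the degenerate case $m=n$ one has $c=c'$ and the lemma is trivial). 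Repairing this by taking $q\in S(cc'A)$ to be the nonforking extension of $p_c$, your argument does show $q\,|\,c'A=p_{c'}$, and your first paragraph correctly shows $\tp(a/c'A)=p_{c'}$. But $a\ind c'\,(cA)$ is a statement about the joint type $\tp(a/cc'A)$: you would need $\tp(a/cc'A)=q$, and the fact that $\tp(a/cc'A)$ and $q$ agree on $cA$ (both restrict to $p_c$) and on $c'A$ (both restrict to $p_{c'}$) does not determine the type over the union---a priori $p_c$ could have several extensions to $cc'A$ restricting to $p_{c'}$ on $c'A$, only one of them nonforking. Your closing sentence ``$\tp(a/c'A)=p_{c'}=q$ is nonforking over $cA$'' silently conflates $S(c'A)$ with $S(cc'A)$, and that conflation is precisely where the content of the lemma lives.

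What is missing is the homogeneity fact that all realizations of $p_c$ have the same type over $cc'A$ (obtainable by extending a realization to a basis of $\cl_p(c)$, then to a basis of $\cl_p(c')$, and applying the automorphism argument of Remark \ref{Rmk0}(i)); granted that, $\tp(a/cc'A)=q$ follows since realizations of $q$ realize $p_c$. The paper gets the needed control directly and more cheaply: choose $a_1,\dots,a_n\models p^n$ with $a_1\dots a_m/E_m=c$ and $a_1\dots a_n/E_n=c'$ (one may arrange $a_1=a$); then $c'\in\dcl(ca_{m+1}\dots a_nA)$, and the independence of $a_1,\dots,a_n$ together with $c\in\dcl(a_1\dots a_mA)$ yields $a_1\ind a_{m+1}\dots a_n\,(cA)$, whence $a_1\ind c'\,(cA)$. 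Incidentally, your worry about transferring ``$x\in\cl_p(c)$'' along nonforking extensions, and the attendant appeal to Lemma \ref{new1}, are unnecessary: by the very definition of $\cl_p$, every realization of $p_c$---hence of any complete extension of $p_c$---lies in $\cl_p(c)$, so concentration on $\cl_p(c)$ is automatic; relative definability of the closure plays no role in this lemma and cannot substitute for the missing analysis of the type over $cc'A$.
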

\begin{proof}
  $c\leq c'$ implies that there are
$a_1,...,a_m,...,a_n\models p^n$   such that:
\begin{center}
 $a_1...a_m/E_m=c$ and
$a_1...a_n/E_n=c'$.\end{center}
 Clearly, any automorphism of $\bar M$ fixing
$c,a_{m+1}...a_n$ pointwise fixes also $c'$, so
$c'\in\dcl(ca_{m+1}...a_nA)$. From the independence of
$a_1,...,a_n$ and $c\in\dcl(a_1,...,a_m,A)$ we get  $a_1\ind
a_{m+1}...a_n\,(cA)$.  Combining with $c'\in\dcl(ca_{m+1}...a_nA)$
we derive $a_1\ind c'\,(cA)$. This completes the proof of the
lemma.
\end{proof}

We will be interested in (types of)   Grassmannians and in types
of their elements, i.e. in $p_{(n)}$'s and $p_c$'s,     when $p$
is a regular, internally isolated type. By Corollary 1.1 from
\cite{T} internal isolation is preserved under non-orthogonality
of regular types whose domains differ on a finite set. Note that
if $p,q\in S(A)$ are two such types which are not almost
orthogonal then the  names of their corresponding Grassmannians are
interdefinable over $A$. In particular, this holds for $p$ and
$p_{(1)}=p/E_1$: the names for $\cl_p(\bar a)$ and $\cl_{p_{(1)}}(\bar
a/E_1)$ are interdefinable over $A$. Further, if such a $p$ has
successor-ordinal $U$-rank then, by Lemma \ref{Rmk01}(ii),
$p_{(1)}$ has $U$-rank 1; for our purposes this is not an
interesting case, because we are interested in isolation
properties of types of $U$-rank $\omega^{\alpha}$ when $\alpha
>1$. So, the interesting case  is when $p$ is regular and $U(p)$
is a limit ordinal. For such a type $p$  it will turn out that
$p_c$'s are stationary for all sufficiently large $n$ and all
$c\models p_{(n)}$, and that $U(p_c)$ can be arbitrarily close to
$U(p_{(1)})=\omega^{\alpha}$.

\begin{lem}\label{luma}Suppose that $p\in S(A)$ is regular, internally isolated and has
limit-ordinal $U$-rank. Denote $p_{(1)}$ by $q$.
\begin{enumerate}[(i)]
\item There exists a natural number    $n$ such that $q_d$ is non-algebraic for all
$m\geq n$ and all $d\models q_{(m)}$.

\smallskip \item 
For any $n$ satisfying part (i): $p_c$ is non-algebraic for all
$m\geq n$ and all $c\models p_{(m)}$.   
\end{enumerate}
\end{lem}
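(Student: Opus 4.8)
The plan is to deduce (ii) from (i) by a direct transfer through the $E_1$-quotient, and to prove (i) by showing that $m\mapsto U(q_d)$ is non-decreasing, so that it suffices to exhibit a \emph{single} finite-dimensional $q$-subspace carrying infinitely many realizations of $q$. For (ii), fix $c\models p_{(m)}$ with $m\ge n$, naming $\cl_p(\bar a)$ for some $\bar a\models p^m$. Since $\bar a$ is $A$-independent, $\bar a/E_1$ is a $q$-independent $m$-tuple, so its name $d$ realizes $q_{(m)}$; by the interdefinability of the two Grassmannian names recalled before the lemma, $c$ and $d$ are interdefinable over $A$ and $\cl_p(c)/E_1=\cl_q(d)$. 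The map $y\mapsto y/E_1$ sends the realizations of $p_c$ (the $p$-points of $\cl_p(c)$, by Remark \ref{Rmk0}(iii)) onto those of $q_d$, because $y\in\cl_p(c)\iff \cl_p(y)\subseteq\cl_p(c)\iff y/E_1\in\cl_q(d)$. As a finite set cannot surject onto an infinite one, non-algebraicity of $q_d$ forces non-algebraicity of $p_c$; this is exactly (ii).

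For (i) set $f(m)=U(q_d)$ for $d\models q_{(m)}$, which is well defined because $q_{(m)}$ is a complete type over $A$, so all such $d$ are $A$-conjugate. I would first show $f$ is non-decreasing. Given $m\le m'$, choose $c'\models q_{(m')}$ and $c\le c'$ with $c\models q_{(m)}$, and take $a\models q_c$. By Lemma \ref{Lem3} we have $a\ind c'\,(cA)$, while $a\in\cl_q(c)\subseteq\cl_q(c')$ identifies $\tp(a/c'A)$ with $q_{c'}$ by the uniqueness of the forking extension (Remark \ref{Rmk0}(iii)). Hence
\[
U(q_{c'})=U(a/c'A)\ \ge\ U(a/cc'A)\ =\ U(a/cA)\ =\ f(m),
\]
so $f(m')\ge f(m)$. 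Since the realizations of $q_d$ are precisely the $q$-points of $\cl_q(d)$, non-algebraicity of $q_d$ means $\cl_q(d)\cap q(\bar M)$ is infinite; thus it is enough to find one finite-dimensional subspace containing infinitely many points, monotonicity then propagating non-algebraicity to all larger dimensions and fixing the required $n$.

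The crux is producing that subspace. Here I use that, $U(p)$ being a limit ordinal, Lemma \ref{Rmk01}(ii) gives $U(q)=\omega^{\alpha}$ with $\alpha\ge 1$, whence $U(q)\ge 2$; consequently $q$ has a forking extension of $U$-rank $1$. Choose an $\acl$-closed $\bar e\supseteq A$ and a stationary rank-one type $r=\tp(x/\bar e A)$ extending $q$, and let $(x_i)_{i<\omega}$ be a Morley sequence of $r$ over $\bar e A$; these are distinct realizations of $q$. The canonical base $c_0=\Cb(r)$ lies in $\acl(x_0\dots x_N A)$ for some finite $N$, and since $r$ forks over $A$ we have $c_0\notin\acl(A)$ and $x_i\dep c_0\,(A)$ for every $i$. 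For $i>N$ this yields $x_i\dep x_0\dots x_N\,(A)$, i.e. $x_i\in\cl_q(x_0\dots x_N)$. Hence the finite-dimensional subspace $\cl_q(x_0\dots x_N)$ contains the infinitely many distinct points $x_i$ with $i>N$, as required.

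I expect this rank-one Morley-sequence step to be the main obstacle: it is what excludes a degenerate geometry in which every finitely generated subspace has only finitely many $q$-points, a situation that would make every $q_d$ algebraic and (i) false. The resolution traps an infinite Morley sequence of a rank-one forking extension inside the fixed finite subspace spanned by $x_0,\dots,x_N$, which is available precisely because $U(q)\ge 2$. The remaining ingredients—well-definedness and monotonicity of $f$, and the transfer in (ii)—are then routine given Lemmas \ref{Rmk01} and \ref{Lem3}, Remark \ref{Rmk0}(iii), and the interdefinability of the Grassmannian names.
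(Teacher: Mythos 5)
Your proposal is correct and takes essentially the same route as the paper: for (i) you run a Morley sequence in a forking extension of $q$, trap its canonical base in the algebraic closure of a finite initial segment to produce one finite-dimensional subspace whose $q_d$ is non-algebraic, and then propagate upward using Lemma \ref{Lem3} together with the uniqueness of the forking extension (Remark \ref{Rmk0}(iii)), exactly the paper's mechanism, while your (ii) is the same $E_1$-transfer the paper dispatches in one line. The only cosmetic differences are that you insist on a rank-one extension and count an infinite tail of realizations inside the subspace, where the paper takes an arbitrary non-algebraic forking extension and witnesses non-algebraicity by the first dependent element of the sequence, and that you package the propagation as monotonicity of $m\mapsto U(q_d)$ rather than as direct preservation of non-algebraicity under nonforking extensions.
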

\begin{proof} (i) By Lemma \ref{Rmk01}(ii) $U(q)=\omega^{\alpha}>1$ so
$q$ has  a non-algebraic, forking extension  $r=\stp(a/B)$. Let
$I=(a_i\,|\,i\in\omega)$ be a  Morley sequence in $r$. Then
$\Cb(r)\subset\dcl(I)$. Let $n$ be the smallest integer such that
$\{a_0,...,a_{n}\}$ is not independent over $A$. Let $d$ be the
name for $\cl_q(a_0...a_{n-1})$. Then $d\models q_{(n)}$ and
$\tp(a_{n}/a_0,...,a_{n-1}Ad)$ is non-algebraic, because it is a
nonforking extension of $r$. It also extends $q_d=\tp(a_{n}/Ad)$
so $q_d$ is non-algebraic. Now suppose that $d\leq d'\models q_{(m)}$ holds. By Lemma \ref{Lem3} we have $a_n\ind d'(dA)$  so $\tp(a_n/Add')$ is non-algebraic because it is a nonforking extension of $q_d$.  On the other hand  $\tp(a_n/Add')$ is an extension of $q_{d'}$ and $q_{d'}$ is non-algebraic, too.

\smallskip 
(ii)   $p$ and $q$ have interdefinable
names of Grassmannians and $q\in\dcl(p)$ holds, so  $p_c$ is non-algebraic whenever the corresponding $q_d$ is non-algebraic.
\end{proof}

\begin{dfn}Suppose that $p\in S(A)$ is regular, internally isolated and has
limit-ordinal $U$-rank. Denote $p_{(1)}$ by $q$. Define  $n_p$ to
be the smallest integer  $n$ such that $q_d$ is  non-algebraic for
all $m\geq n$ and all $d\models q_{(m)}$.
\end{dfn}

\begin{rmk}\label{neWr} Suppose that $p\in S(A)$ is regular, internally isolated and has
limit-ordinal $U$-rank. Denote $p_{(1)}$ by $q$.

\smallskip (i) Strict regularity of $p_{(1)}$,  established in Lemma
\ref{Rmk01}(i), implies that $n_p\geq 2$ always holds.

\smallskip (ii) By  Lemma \ref{luma}(ii)  $p_c$ is non-algebraic   
for any $c$ naming a  Grassmanian of $p$-dimension $\geq n_p$. 
\end{rmk}

\begin{lem}\label{Lem2} Suppose that $A$ is finite and that $p\in S_1(A)$ is
a regular, internally isolated type of limit-ordinal $U$-rank.
Then for all $n\geq n_p$ and all $c\models p_{(n)}$:
\begin{enumerate}[(i)]
\item $p_c$ is stationary.

\smallskip \item    If   $a_1,...,a_n\in\cl_p(c)$ then:\ $\bar
a\models p^n$ if and only if $\bar a\models (p_c)^n$.  Moreover,
the same holds for any $m\leq n$ in place of $n$.

\smallskip \item   $p_c^n\,\aor A$.\end{enumerate}
\end{lem}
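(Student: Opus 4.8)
The plan is to make part (ii) the engine and read off (i) and (iii) from it, so I would prove (i) first, then (ii), then (iii). The reduction I would set up at the outset is this: by Remark~\ref{Rmk0}(ii), for each $m\le n$ there is a unique type $\pi_m\in S_m(cA)$ of an $A$-independent $m$-tuple drawn from $\cl_p(c)$, and every $\bar a\models p^m$ with $\bar a\subseteq\cl_p(c)$ realizes $\pi_m$. Each coordinate of such a tuple lies in $\cl_p(c)$, hence forks with $c$ over $A$, so by Remark~\ref{Rmk0}(iii) it realizes $p_c$; thus $\pi_m$ extends $\bigcup_{i\le m}p_c(x_i)$. Using (i), the power $(p_c)^m$ is the unique type over $cA$ whose coordinates are independent realizations of $p_c$, so the whole of (ii) collapses to the single identity $\pi_n=(p_c)^n$, i.e. to showing that an $A$-independent basis $\bar a$ of $\cl_p(c)$ is $cA$-independent; the cases $m<n$ then follow by restricting $\pi_n=(p_c)^n$ to its first $m$ coordinates.

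For (i) I would show $\mult(p_c)=1$. Since $n\ge n_p$, $p_c$ is non-algebraic by Remark~\ref{neWr}(ii). Any strong type over $cA$ extending $p_c$ is realized by some $a\in\cl_p(c)$ with $a\models p$, and since the relation $E_n$ is defined over $A$, the element $c$ is interdefinable with the canonical name of the $\acl^{eq}(cA)$-invariant set $\cl_p(c)$. Running the automorphism argument of Remark~\ref{Rmk0}(i) over $\acl^{eq}(cA)$ in place of $cA$ — legitimate because $\cl_p(c)$ and each $E_m$-class are fixed setwise by $\Aut(\bar M/\acl^{eq}(cA))$ — transports any two such realizations to one another, so they share a strong type and $p_c$ is stationary. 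The delicate point is precisely the upgrade of Remark~\ref{Rmk0}(i) from types to strong types, which I would secure by noting that $\acl^{eq}(cA)\cap\dcl^{eq}(\bar aA)$ is generated by the $E_m$-classes of $\bar a$, all of which lie in $\dcl^{eq}(cA)$.

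The crux is the $cA$-independence of the basis in (ii), and I would argue it by Lascar rank. Write $u=U(p_c)$, well defined by (i). The natural-sum bound gives $U(\bar a/cA)\le u\oplus\cdots\oplus u$ ($n$ summands), with equality exactly when $\bar a$ is $cA$-independent. On the other hand $c\in\dcl(\bar aA)$, so $U(\bar a/A)=U(\bar ac/A)$, and the Lascar inequalities give $U(c/A)+U(\bar a/cA)\le U(\bar a/A)\le U(c/A)\oplus U(\bar a/cA)$. I expect the main obstacle to be the rank additivity $U(p^n)=U(p_{(n)})\oplus(u\oplus\cdots\oplus u)$: once it is in hand, feeding it into the two estimates squeezes $U(\bar a/cA)$ up to $u\oplus\cdots\oplus u$ (the natural sum being cancellative), forcing $cA$-independence. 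I would establish this additivity by adjoining the basis vectors one at a time, using Lemma~\ref{Lem3} to view each new generic as independent from the Grassmannian already spanned, and using $n\ge n_p$ (non-algebraicity, $u\ge1$) together with the limit-ordinal hypothesis (least monomial $\omega^\alpha$) to ensure that the monomials align so that no rank is lost when the natural sums are compared.

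Granting (ii), a realization of $p_c^n$ is precisely an $A$-independent basis $\bar a\models p^n$ of $\cl_p(c)$, with $c\in\dcl(\bar aA)$. Reading $p_c^n\aor A$ as almost orthogonality of $p_c^n$ to $A$, the assertion becomes that such a basis is independent over $cA$ from any tuple realizing a type over $A$ that is free from $c$ over $A$. This I would obtain from $c\in\dcl(\bar aA)$ together with the exchange property of the pregeometry $\cl_p$: a dependence over $cA$ of $\bar a$ on an external $A$-tuple would, after absorbing $c\in\dcl(\bar aA)$ into the base, contradict the $\cl_p$-independence of $\bar a$ over $A$ recorded by $\bar a\models p^n$.
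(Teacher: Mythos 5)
Your overall reduction (collapse (ii) to the identity $\pi_n=(p_c)^n$ via Remark \ref{Rmk0} and stationarity) is sound, but each of the three load-bearing steps has a genuine gap. In (i), the upgrade of Remark \ref{Rmk0}(i) from types to strong types is precisely the hard point, and your justification --- that $\acl^{eq}(cA)\cap\dcl^{eq}(\bar aA)$ is generated by the $E_m$-classes of $\bar a$ --- is asserted, not proved; nothing in the hypotheses yields it, since there could be finite $cA$-invariant equivalence relations on $p_c(\bar M)$ invisible to the $E_m$'s. A telling sign is that you use $n\geq n_p$ only for non-algebraicity, whereas $n_p$ is defined through the quotient $q=p_{(1)}$ and the paper's stationarity proof uses this essentially: it shows that two $Ac$-independent realizations $a_1,a_2$ of $p_c$ must be $A$-independent, because otherwise their common $E_1$-class $b$ lands in $\acl(dA)$ (with $d$ the name of $\cl_q(c)$), making $q_d$ algebraic and contradicting $n\geq n_p$; then Remark \ref{Rmk0}(ii) gives a unique type of an independent pair, hence stationarity. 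Without an argument of this kind, your (i) is unproven.

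In (ii) you concede the crux yourself: the additivity $U(p^n)=U(p_{(n)})\oplus(u\oplus\cdots\oplus u)$ is left as ``the main obstacle,'' and in a general superstable theory it cannot be established this way --- $U$-rank obeys only the Lascar inequalities, additivity over independent tuples genuinely fails, and since $U(p)$ is merely a limit ordinal (not necessarily a monomial) the phrase ``the monomials align'' has no content; indeed none of $U(p^n)$, $U(p_{(n)})$, $u$ is computable from the hypotheses. The paper avoids rank entirely: it takes an infinite Morley sequence $I$ in $p_c$, uses $\Cb(p_c)\subseteq\dcl(I)\cap\dcl(cA)$, and shows a maximal $A$-independent initial segment of $I$ has length exactly $n$, since otherwise some coordinate $a_i$ of an $A$-independent $n$-tuple in $\cl_p(c)$ would satisfy $a_i\ind \Cb(p_c)\,(A)$, contradicting $a_i\models p_c$; note this proves the implication opposite to yours ($cA$-independent $\Rightarrow$ $A$-independent) and gets the converse for free from Remark \ref{Rmk0}(i). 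Finally, in (iii) the exchange property of $\cl_p$ governs only realizations of $p$, while the witness $b$ to the failure of $p_c^n\,\aor\,A$ is an arbitrary tuple with $b\ind c\,(A)$; from $b\dep\bar a\,(cA)$ you can conclude only $b\dep\bar a\,(A)$, which does not contradict $\bar a\models p^n$ at all. The missing idea is a weight argument: since $\wt_p(\bar a/Ac)=0$, one replaces $b$ by a suitable element of $\Cb(\bar ac/bA)$ so as to assume $\wt_p(b/A)=0$; then $\stp(b/A)\perp p$ forces $b\ind\bar a\,(A)$, and $c\in\dcl(\bar aA)$ upgrades this to $b\ind\bar a\,(Ac)$, the desired contradiction.
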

\begin{proof} Let $q=p/E_1$ and let $d$ be the name for $\cl_q(c)$. So $d\models q_{(n)}$. 

(i) Suppose that $a_1,a_2$ realize  $p_c$ and $a_1\ind a_2\,(Ac)$;
we will show that $a_1\ind a_2\,(A)$ holds.  Otherwise $a_1\dep
a_2\,(A)$ and let $a_1/E_1=a_2/E_1=b\models q$.  Then $a_1\ind
a_2\,(Ac)$ and $b\in\dcl(a_2A)$ imply $a_1\ind b\,(Ac)$ so,
because $b\in\dcl(a_1A)$,  we get $b\in\acl(cA)$. Because $c$ and
$d$ are interdefinable over $A$  we get $b\in\acl(dA)$. This holds
for any $b\models q_d$ because $a_1$ can be an  arbitrary element
of $\cl_p(c)$. We conclude that $q_d$ is algebraic, which is in
contradiction with $n\geq n_p$. Thus any pair of independent
realizations of $p_c$ realizes also $p^2$. Since $n_p\geq 2$ holds
we can apply Remark \ref{Rmk0}(ii) to conclude that there is a
unique type over $cA$ of   a pair of independent over $A$
realizations of $p_c$; $p_c$ is stationary.

\smallskip
(ii)  Suppose that $\bar a$ realizes $p^n$ and $\bar a\subset\cl_p(c)$.
Let $I=b_1,b_2...$ be an infinite Morley sequence in $p_c$, and
let $C=\Cb(p_c)\subset \dcl(I)\cap\dcl(cA)$. Pick the largest $m$
such that $\{b_1,...,b_m\}$ is independent over $A$; then
$I\subset\cl_p(b_1...b_m)$ so $C\subset\dcl(\cl_p(b_1...b_m))$.
Since $I\subset \cl_p(c)$ we have $m\leq n$. Suppose that $m<n$
holds. Then for some $i$ we have $a_i\ind b_1...b_m \,(A)$. This
implies $a_i\ind \dcl(\cl_p(b_1...b_m))\,(A)$ so $a_i\ind C(A)$
which is in contradiction with $a_i\models p_c$ and $C=\Cb(p_c)$.  Therefore $m=n$
and any tuple   $\bar b\models(p_c)^n$ also realizes $p^n$. This proves  the direction $\Leftarrow$. The other direction follows by  Remark \ref{Rmk0}(i).

\smallskip (iii) Suppose that $p_c^n\,\aor A$ fails to be true. Then
there is $b \ind c\,(A)$ such that $b\dep \bar a\,(Ac)$ (where
$\bar a\models p_c^n$ and $c$ names $\cl_p(\bar a)$). Since
$\wt_p(\bar a/Ac)=0$ and $b \ind c\,(A)$, after replacing $b$ by
an appropriate element from $\Cb(\bar ac/bA)$, we may assume
$\wt_p(b/A)=0$. Then $p\bot \stp(b/A)$ and, because $\bar a\models
p^n$, we have $b\ind \bar a\,(A)$. Now $c\in \dcl(A\bar a)$
implies $b\ind \bar a\,(Ac)$. A contradiction.
\end{proof}

We say that a non-algebraic type $p\in S_1(A)$  is {\em primitive}
if there is no nontrivial $A$-definable equivalence relation on
its locus; clearly, a primitive type is stationary.  We say that a
non-algebraic type $p\in S_1(A)$ is {\em strictly primitive} if it is stationary and
for all $a,b\models p$ either $a=b$ or $a\ind b\,(A)$ holds;
equivalently,   $p^2(x,y)$ is the unique complete extensions of
$p(x)\cup p(y)\cup\{x\neq y\}$ in $S_2(A)$. Clearly, a strictly
regular type is strictly primitive, while a strictly primitive
type is primitive.

\begin{rmk}\label{Rmk1}    A
primitive type  is semiregular and has $U$-rank of the form
$\omega^{\alpha}\cdot n$ where $n$ is the weight of the type; in
particular, it is $\nor$ to any of its extensions of $U$-rank
$\geq \omega^{\alpha}$. Moreover, whenever $p$ is primitive, $q$
is regular and $p \nor q$, then $p$ is $q$-internal and
$q$-semiregular: Suppose that $p=\tp(a/A)$ is primitive and let
$b\in\dcl(aA)\setminus\dcl(A)$ be such that $\tp(b/A)$ is
semiregular with monomial $U$-rank; say $b=f(a)$ where $f$ is an
$A$-definable function. $f(x)=f(y)$ defines an equivalence
relation on $p(\bar M)$ so, because $p$ is primitive, it is the
identity relation. Thus $x\equiv a(bA)\vdash x=a$ so
$a\in\dcl(bA)$ and $\tp(a/A)$ is semiregular of monomial $U$-rank.
\end{rmk}

\begin{lem}\label{lprim} Suppose that $p\in S_1(A)$ is a regular, internally
isolated type.
\begin{enumerate}[(i)]
 \item $p$ is primitive if and only if it is  strictly
primitive.

\smallskip
\item If $p$ is primitive and has limit-ordinal $U$-rank then
$p_c$ is strictly primitive  for all $n\geq n_p$ and  $c\models
p_{(n)}$. \end{enumerate}
\end{lem}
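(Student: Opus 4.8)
The plan is to prove the nontrivial implication of (i)---that a primitive, regular, internally isolated $p$ is automatically strictly primitive---and then to derive (ii) from (i) together with Lemma \ref{Lem2}. For (i) the starting observation is that, since $p$ is regular and non-algebraic, for $a,b\models p$ the four conditions $a\in\cl_p(b)$, $b\in\cl_p(a)$, $a\dep b\,(A)$ and $\cl_p(a)=\cl_p(b)$ are mutually equivalent: reflexivity is $a\in\cl_p(a)$, symmetry is exchange (using $a\notin\cl_p(A)$), and transitivity is idempotence of the closure. Thus the relation $E(y,z)$ defined by $y\in\cl_p(z)$ is an equivalence relation on $p(\bar M)$, and by Lemma \ref{new1} (equivalently, its Corollary at $n=1$) it is relatively $A$-definable; that is, it is the restriction to $p(\bar M)$ of the $A$-definable equivalence relation $E^p_1$.

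Next I would invoke primitivity: the restriction of $E^p_1$ to the locus of $p$ must be trivial, hence either equality or the total relation. The total relation is impossible, because $p$ is non-algebraic, so $p^2$ is consistent and produces distinct $a,b\models p$ with $a\ind b\,(A)$, i.e. $a\notin\cl_p(b)$. Therefore $E$ is equality on $p(\bar M)$, which means that $a\neq b\models p$ forces $\cl_p(a)\neq\cl_p(b)$, hence $a\notin\cl_p(b)$ and $a\ind b\,(A)$. This is exactly strict primitivity, stationarity being automatic for regular types; the reverse implication is already recorded before the lemma.

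For (ii), fix $n\geq n_p$ and $c\models p_{(n)}$. By part (i) the type $p$ is strictly primitive, by Lemma \ref{Lem2}(i) the type $p_c$ is stationary, and by Remark \ref{neWr}(ii) it is non-algebraic. Take $a,b\models p_c$ with $a\neq b$. Since $p_c$ extends $p$ we have $a,b\models p$, so strict primitivity of $p$ yields $a\ind b\,(A)$, i.e. $\{a,b\}\models p^2$. As $a,b\in\cl_p(c)$, I would then apply Lemma \ref{Lem2}(ii) with $m=2\leq n$ to convert this into $\{a,b\}\models (p_c)^2$, which is precisely $a\ind b\,(Ac)$. Hence any two distinct realizations of $p_c$ are independent over $Ac$, so $p_c$ is strictly primitive.

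The genuine content sits in part (i), and it is essentially delivered by the definability results of Section 2; the only thing to verify by hand is that primitivity rules out the total relation, which is immediate from non-algebraicity. In part (ii) the single crucial move is the passage from independence over $A$ to independence over $Ac$, and this is exactly what Lemma \ref{Lem2}(ii) provides---so the hypothesis $n\geq n_p$ is needed precisely to have that lemma, along with the stationarity of $p_c$, at our disposal.
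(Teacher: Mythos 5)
Your proposal is correct and takes essentially the same route as the paper: part (i) is the paper's argument that, by internal isolation and primitivity, $E^p_1$ must restrict to equality on $p(\bar M)$ (you merely make explicit the exclusion of the total relation via non-algebraicity, which the paper leaves implicit), and part (ii) coincides with the paper's proof word for word, deducing non-algebraicity from Remark \ref{neWr}(ii), stationarity from Lemma \ref{Lem2}(i), and converting $(a,b)\models p^2$ into $(a,b)\models (p_c)^2$ via Lemma \ref{Lem2}(ii).
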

\begin{proof}(i) If $p$ is primitive  then $E_1$ is the equality on $p(\bar M)$,
so any two distinct realizations of $p$ are independent over $A$
and $p$ is strictly primitive.

\smallskip(ii) $p_c$ is non-algebraic by Remark \ref{neWr}(ii); it is stationary by Lemma \ref{Lem2}(i).  Suppose that  $a,b$ are distinct realizations of
$p_c$. Because $p$ is strictly primitive we have $(a,b)\models
p^2$ which, by Lemma \ref{Lem2}(ii), implies $(a,b)\models
(p_c)^2$. Thus any pair of distinct realizations of $p_c$ realizes
$(p_c)^2$ and $p_c$ is strictly primitive.
\end{proof}

\begin{rmk} If $p\in S_1(A)$ is a regular, internally
isolated,  primitive type of limit-ordinal $U$-rank then $p/E_1$
and $p$ are interdefinable, so $n_p$ is the smallest integer $n$
for which $p_c$'s are non-algebraic for $c\models p_{(n)}$.
\end{rmk}

\begin{dfn}
We say that a complete type $q$ {\em controls} a complete type
$p$, or that $p$ is {\em $q$-controlled}, if $p$ is foreign to $q$
(i.e. $p$ is $\bot$ to any extension of $q$) and any forking
extension of $p$ is $q$-internal.
\end{dfn}

\begin{prop}\label{Prop2}Suppose that $p\in S_1(A)$ is a regular, primitive, internally
isolated type  of $U$-rank  $\omega^{\alpha+1}$.

\smallskip
(1) There exists a regular type $q$ of $U$-rank $\omega^{\alpha}$
which controls $p$.

\smallskip
(2) If   $q\in S_1(A)$ has $U$ rank $\omega^{\alpha}$ and controls
$p$  then for all $n\geq n_p$ and $c\models p_{(n)}$ the binding
group $G_c=\Aut_{q(\bar M)A}(p_c(\bar M))$ acts transitively on
the locus of $(p_c)^n$;

\smallskip (3) If $c\models p_{(n)}$ and
$U(p_c)\geq\omega^{\alpha}$ then the generic type of $G_c$ is
$\nor q$.
\end{prop}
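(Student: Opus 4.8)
The plan is to prove the three parts of Proposition \ref{Prop2} in order, using the semiregularity and internal-isolation machinery developed above.

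\medskip
\textbf{Part (1).} The aim is to produce a regular $q$ of $U$-rank $\omega^\alpha$ controlling $p$. Since $p$ has $U$-rank $\omega^{\alpha+1}=\omega^\alpha\cdot\omega$, the idea is to look at a nonalgebraic forking extension of $p$ and extract a regular type governing the drop in rank. First I would take $c\models p_{(n)}$ for some large $n\geq n_p$ so that, by Lemma \ref{Lem2}(i), $p_c$ is a stationary forking extension of $p$, and by Remark \ref{neWr}(ii) it is nonalgebraic. Because $p$ is primitive with limit-ordinal rank, the remarks in the preamble (the discussion following Lemma \ref{luma}) tell us $U(p_c)$ can be taken arbitrarily close to $\omega^{\alpha+1}$, in particular $\geq\omega^\alpha$; since $p_c$ is a forking extension it is $\nor p$, and by Remark \ref{Rmk1} any such extension is $\nor$ to the regular pieces. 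The plan is to let $q$ be a regular type of $U$-rank $\omega^\alpha$ nonorthogonal to $p$ --- produced by the standard fact quoted in the preamble that the lowest monomial $\omega^\alpha$ in the Cantor normal form of a forking extension yields a regular $q$ of that rank. Then I must verify the two defining clauses of control: that $p$ is foreign to $q$ and that every forking extension of $p$ is $q$-internal. Foreignness should follow because $U(p)=\omega^{\alpha+1}$ while $q$ has strictly smaller rank $\omega^\alpha$, forcing orthogonality of $p$ to every extension of $q$ (a type cannot be nonorthogonal to a regular type of strictly smaller monomial rank without dropping rank). For $q$-internality of forking extensions, I would use primitivity: a forking extension drops rank to a monomial $\omega^\alpha\cdot m$, which by Remark \ref{Rmk1} is $q$-semiregular and $q$-internal.

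\medskip
\textbf{Part (2).} Here the goal is transitivity of the binding-group action of $G_c=\Aut_{q(\bar M)A}(p_c(\bar M))$ on the locus of $(p_c)^n$. The binding group is type-definable because $p_c$ is $q$-internal (from control) and stationary. The key point is Remark \ref{Rmk0}(i): over $cA$ there is a \emph{unique} type of an independent $n$-tuple drawn from $\cl_p(c)$, and by Lemma \ref{Lem2}(ii) such tuples are exactly the realizations of $(p_c)^n$. So any two realizations $\bar a,\bar a'$ of $(p_c)^n$ have the same type over $cA$, hence (since $c\in\dcl(A\bar a)$ and everything lives inside the $q$-internal cover) there is an automorphism fixing $A\cup q(\bar M)$ pointwise carrying $\bar a$ to $\bar a'$; that automorphism restricts to an element of $G_c$. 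The main thing to check carefully is that the automorphism witnessing $\tp(\bar a/cA)=\tp(\bar a'/cA)$ can be chosen to fix $q(\bar M)$ pointwise, which is exactly what $q$-internality plus foreignness buys us.

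\medskip
\textbf{Part (3).} The goal is that when $U(p_c)\geq\omega^\alpha$ the generic type of $G_c$ is $\nor q$. The expected route is via the standard relation between the $U$-rank of the binding group and the rank of the type it acts on: the binding group $G_c$, acting faithfully and (by part (2)) transitively on the $q$-internal set $p_c(\bar M)$, has its generic type controlling the same rank as the cover $p_c$, so $U$ of the generic of $G_c$ is at least $U(p_c)\geq\omega^\alpha=U(q)$. Since $q$ controls $p$ and hence $p_c$, everything inside this internal picture is $q$-analysable, and a type of rank $\geq\omega^\alpha$ inside a $q$-internal, $q$-controlled configuration cannot be orthogonal to $q$ (orthogonality to $q$ together with $q$-internality would force rank $<\omega^\alpha$). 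Thus the generic of $G_c$ is $\nor q$.

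\medskip
\emph{The hard part} will be Part (2): extracting the transitivity of the binding-group action requires pinning down that the automorphism realizing the uniqueness in Remark \ref{Rmk0}(i) genuinely lies in the binding group, i.e.\ fixes $q(\bar M)$ pointwise. This is where foreignness of $p$ to $q$ (clause one of control) must be used to show that moving $\bar a$ does not disturb $q(\bar M)$. The rank computation in Part (3) is the second delicate point, since it rests on the nonorthogonality-to-$q$ criterion for $q$-internal types of large rank; I would isolate that as the crux and handle it by the weight/rank estimates quoted for $q$-semiregular types in the preamble.
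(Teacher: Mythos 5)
Your outline for parts (2) and (3) points in the right direction, but part (1) has a genuine gap at exactly the place where the paper's proof does its real work. Control of $p$ by $q$ requires that \emph{every} forking extension of $p$ be $q$-internal for the \emph{single} type $q$ you fixed. Your sketch says: a forking extension drops to monomial rank $\omega^{\alpha}\cdot m$ and is then $q$-semiregular and $q$-internal by Remark \ref{Rmk1}. This fails twice. First, an arbitrary forking extension of $p$ need not be primitive nor of monomial rank, so Remark \ref{Rmk1} does not apply to it directly; one must first reduce to the types $p_c$, which the paper does by taking a Morley sequence in the given forking extension and showing it is parallel to an extension of some $p_d$ (this reduction is absent from your proposal). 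Second, and more seriously, Remark \ref{Rmk1} yields $q$-internality of $p_c$ only \emph{given} $p_c\nor q$ --- but your construction only guarantees that $q$ is nonorthogonal to the one forking extension you used to produce it. A priori each $p_c$ could be nonorthogonal to a \emph{different} regular type of rank $\omega^{\alpha}$. The paper closes this uniformity gap by a chain argument: given the distinguished $d$ with $p_d\nor q$ and an arbitrary $c$, it passes to a common larger Grassmannian $c'\geq c,d$, uses Lemma \ref{Lem3} to see that $p_d\,|\,dc'$ is a \emph{nonforking} extension of $p_d$ extending $p_{c'}$, deduces $p_{c'}\nor p_d\nor q$ via primitivity (Lemma \ref{lprim}(ii)) and Remark \ref{Rmk1}, and then pulls $q$-internality back to $p_c$ through the nonforking restriction $p_c\,|\,cc'$. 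Nothing in your sketch substitutes for this step. (Also, your phrase ``$q$ nonorthogonal to $p$'' cannot be literally intended: a regular type nonorthogonal to the regular $p$ would have $U$-rank at least $\omega^{\alpha+1}$; you mean nonorthogonal to a forking extension of $p$. Note the paper takes $q$ simply to be a stationary extension of $p$ of $U$-rank exactly $\omega^{\alpha}$, which is automatically regular.)

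In part (2) you correctly isolate the crux --- that the automorphism taking $\bar a$ to $\bar a'$ must fix $q(\bar M)$ pointwise --- but ``$q$-internality plus foreignness'' is not by itself an argument, and foreignness alone cannot give $\bar a\ind q(\bar M)\,(cA)$: the type $p_c$ is $q$-\emph{internal}, hence as far from foreign to $q$ as possible. The paper's mechanism is two-step: $\tp(c/A)$ is $p$-semiregular and $p$ is foreign to $q$, giving $c\ind q(\bar M)\,(A)$; then the almost-orthogonality statement $(p_c)^n\aor A$ of Lemma \ref{Lem2}(iii) --- which you never invoke, and which is the tool designed precisely for this --- upgrades this to a unique type of a realization of $(p_c)^n$ over $cA\cup q(\bar M)$, whence transitivity of $G_c$. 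Your part (3) is essentially the paper's argument (transitivity gives $U(G_c)\geq U(p_c)\geq\omega^{\alpha}$, and a nonalgebraic $q$-internal generic is $\nor q$) and is fine.
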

\begin{proof}  Without loss of generality suppose $A=\emptyset$.

\smallskip (1) \ First  we show that any forking extension of $p$ is
parallel to an extension of some $p_d$. Indeed, let $\tp(a/B')$ be
a forking extension of $p$ and let $I=a_1,a_2,....$ be an infinite
Morley sequence in $\stp(a/B')$. Let $m$ be maximal such that
$a_1,...,a_m$ is independent over $\emptyset$ and let $d$ name
$\cl_p(a_1,...,a_m)$. The independence of $I$ and
$d\in\dcl(a_1,...,a_m)$ imply $a_{m+1}\ind a_1...a_md\,(B')$.
Hence $\tp(a_{m+1}/B')$ is parallel to $\tp(a_{m+1}/a_1...a_md)$
which is an extension of $\tp(a_{m+1}/d)=p_d$.

\smallskip
Let $B$ be   finite and let $q=\tp(a/B)$ be a stationary extension
of $p$ such that $U(q)=\omega^{\alpha}$. We will show that $q$
controls $p$.   $p$ is clearly foreign to $q$, so it remains to
prove that any forking extension of $p$ is $q$-internal. Since any
forking extension of $p$ is (parallel to) an extension of $p_c$
for some $n\geq 2$ and some $c\models p_{(n)}$, it suffices to
show that any $p_c$ is $q$-internal. $\tp(a/B)$ is a forking
extension of $p$, so let $a_1...a_m$ and $d=a_1...a_m/E_m$ be as
in the first paragraph of the proof. We have:
$$\omega^{\alpha+1}>U(p_d)\geq U(a/B)=\omega^{\alpha}.$$
$p_d$ is clearly non-algebraic  so,  by Lemma \ref{lprim}(ii),
$p_d$ is primitive. By Remark \ref{Rmk1} we have
$U(p_d)=\omega^{\alpha}\cdot k$  where $k=\wt(p_d)$. Since $q$ is
parallel to an extension of $p_d$ and $U(q)=\omega^{\alpha}$ we
derive $p_d\nor q$. Since $p_d$ is primitive  Remark \ref{Rmk1}
applies and   $p_d$ is $q$-semiregular and $q$-internal.

Now let $n$ and $c\models p_{(n)}$ be arbitrary and we will prove
that $p_c$ is $q$-internal. Let $b_1,...,b_n\models p^n$ be such
that $c=b_1...b_n/E_n$, and let $c'$ be the name for $\cl_p(\bar
a\bar b)$. Then  $c,d\leq c'$ and both $p_d$ and  $p_{c'}$ are
primitive. In fact,  $p_{c'}$ is $q$-semiregular. To prove it note
that  $p_d\,|\,dc'$ is an extension of $p_{c'}$ so
$U(p_d\,|\,dc')\geq \omega^{\alpha}$ and $U(p_{c'})=
\omega^{\alpha}\cdot l$ (where $l=\wt(p_{c'})$) together imply
$p_{c'}\nor p_d$. Then  $q$-semiregularity of $p_d$ implies
$p_{c'}\nor q$ and $p_{c'}$ is $q$-internal and $q$-semiregular.

Now consider $p_c\,|\,c'c$. It is an extension of $p_{c'}$ so,
because  $p_{c'}$ is $q$-internal, it is $q$-internal, too. On the
other hand,    it is a nonforking extension of $p_c$ so $p_c$ is
$q$-internal, too. $p$ is $q$-controlled.

\smallskip (2) \
Suppose that $q\in S(A)$ controls $p$ and let $n\geq 2$. Since
$\tp(c/A)$ is $p$-semiregular and $p\bot q$ we have \ $c\ind
q(\bar M) \,(A)$ \ which, combined with   $(p_c)^n\aor A$ from
Lemma \ref{Lem2}, implies that there is a unique type  over
$cAq(\bar M)$ of a realization of $(p_c)^n$.  $G_c$ acts
transitively on the locus of $(p_c)^n$.

\smallskip (3) \
Since the action of $G_c$ is transitive we have $U(G_c)\geq
U(p_c)$ and $U(G_c)\geq  \omega^{\alpha}$. Since $G_c$ is
$q$-internal and $U(q)=\omega^{\alpha}$, we conclude that the
generic type of $G_c$ is $\nor q$.
\end{proof}

\section{Proof of Theorem 1}

In this section we will prove Theorem 1. For a specialist in the
stable group theory the proof is rather a straightforward
consequence of Proposition \ref{Prop2}  and well-known facts on
interpreting simple groups or fields in the superstable context.
The essence is in the following: If $p$ is NENI and
$U(p)=\omega^{\alpha+1}$ then, for sufficiently large $n$ and
$c\models p_{(n)}$,   Proposition \ref{Prop2} applies. We get a
regular type $q$ of $U$-rank $\omega^{\alpha}$ which controls $p$,
and  a transitive action of $G_c$ on the locus of $(p_c)^n$. Since
$p_c$ is strictly primitive the action is 2-transitive; in this
situation it is routine to   show that the $\alpha$-connected
component of $G_c$ is $q$-connected and has trivial center.  In
general, for any regular type $q$ the existence of a $q$-connected
group with trivial center implies the existence of a $q$-connected
simple group or of a $q$-connected field. In either of the cases
we will conclude that $q$ is ESN; by Lemma \ref{Lem4} this always
holds in the field case, for simple groups this is an assumption
of the theorem. Thus the existence of a NENI type of $U$-tank
$\omega^{\alpha+1}$ implies the existence of an ESN type of
$U$-rank $\omega^{\alpha}$. This suffices to produce many
countable models by applying Proposition \ref{Prop1}.

\smallskip  We will sketch the proof in some more detail  assuming
that the reader is familiar  with the subject, references are
\cite{Poizat} and {\cite{Wag}. All the groups considered are
type-definable. Following \cite{Hrus}, for a regular type $p$ we
will say that a group is $p$-connected if it is $p$-simple,
connected, and has a generic type domination-equivalent to a power
of $p$. Hrushovski's analysis of   stable groups is based on the
following fact (see Theorem 3.1.1  in \cite{Wag}; for a
group-action version   see Fact 1 in \cite{Hrus}):

\begin{fact}\label{F11} If a generic type of a stable group $G$ is non-orthogonal
to a type $p$ then there is a relatively definable, normal
subgroup $H$ of infinite index such that generic types of $G/H$
are $p$-internal and $\nor\,p$.
\end{fact}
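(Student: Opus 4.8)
The plan is to read a $p$-internal coordinate off a generic of $G$ and then realize it as the generic of a definable quotient $G/H$ via the stabilizer machinery for stable groups. First I would reduce to the case that $G$ is connected and type-definable over a model $M$, with $p\in S(M)$ stationary (extending $M$ and replacing $p$ by a stationary nonforking extension preserves non-orthogonality); since a connected group whose generic is $p$-internal is itself $p$-internal — every element is a product of two independent generics, and $p$-internality is preserved under products and under $\dcl$ — it suffices to make the principal generic $p$-internal. Let $q$ be the generic of $G$ over $M$ and $g\models q$. Non-orthogonality of $q$ and $p$ yields, after enlarging $M$, some $a\models p$ with $a\dep g\,(M)$. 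Put $c=\Cb(a/Mg)$. As $T=T^{eq}$ we have $c\in\dcl(Mg)$, and $c\notin\acl(M)$ because $\stp(a/Mg)$ forks over $M$; moreover $c$ lies in $\dcl$ of a Morley sequence in $\stp(a/Mg)$, each term of which realizes $p$, so $\tp(c/M)$ is $p$-internal and, being non-algebraic, $\nor p$. Thus $g$ carries a non-algebraic, $p$-internal coordinate $c\in\dcl(Mg)$.

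Next I would turn $c$ into a quotient. Writing $r=\tp(g/Mc)$, consider the stabilizer $H=\mathrm{Stab}(r)$ of $r$ under left translation, i.e.\ the group of $h$ for which the nonforking extension of $r$ is fixed by $x\mapsto hx$ whenever $h\ind g\,(Mc)$. The stabilizer theorem for connected stable groups says that $H$ is a relatively $M$-definable normal subgroup and that the image of $g$ in $G/H$ is interalgebraic over $M$ with $c$. Hence $H$ has infinite index (as $c\notin\acl(M)$), $G/H$ is infinite, and by connectedness $G/H$ is a $p$-internal group; in particular every generic of $G/H$ is $p$-internal and, being non-algebraic, $\nor p$. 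To obtain the canonical maximal such quotient one notes that the relatively definable normal $N\trianglelefteq G$ with $G/N$ $p$-internal are closed under finite intersection, since $G/(N_1\cap N_2)$ embeds in $G/N_1\times G/N_2$ and a product of $p$-internal groups is $p$-internal; passing to the corresponding connected component produces the smallest such $H$, still of infinite index.

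The main obstacle is the middle step: producing the relatively definable normal subgroup $H$ out of the coordinate $c$. This is precisely Hrushovski's stabilizer analysis, whose delicate content is the relative definability of $\mathrm{Stab}(r)$ and the interalgebraicity of the image of $g$ in $G/H$ with $c$. Everything surrounding it — the canonical base computation showing $c$ is $p$-internal and $\nor p$, and the preservation of $p$-internality under the group operations and under passage to the maximal quotient — is routine bookkeeping.
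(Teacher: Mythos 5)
The paper does not prove this Fact at all: it is quoted from the literature (Theorem 3.1.1 of \cite{Wag}; Fact 1 of \cite{Hrus}), so your attempt has to be measured against that standard proof. Your opening reduction (connected $G$ over a model, stationary $p$) and your canonical-base step producing $c=\Cb(a/Mg)\in\dcl(Mg)\smallsetminus\acl(M)$ with $\tp(c/M)$ $p$-internal and $\nor p$ are correct, and this is exactly how the standard proof begins. The gap is the middle step, which you yourself flag as the crux and outsource to a ``stabilizer theorem for connected stable groups'': no such theorem exists. In a merely stable group the left stabilizer of a complete type $r=\tp(g/Mc)$ is only type-definable --- it is the intersection of infinitely many relatively definable $\varphi$-stabilizers, and relative definability of the subgroup is precisely the delicate content of the Fact; left stabilizers are also not normal; and the asserted interalgebraicity of the image of $g$ in $G/H$ with $c$ is simply false. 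Concretely, let $G=K^+\times K^+$ for $K$ an algebraically closed field, $g=(g_1,g_2)$ generic over $M$, and $c=g_1g_2$. Then $r=\tp(g/Mc)$ is the generic type of the curve $x_1x_2=c$, and $(x_1+h_1)(x_2+h_2)=x_1x_2$ for generic $(x_1,x_2)$ on the curve forces $h_1=h_2=0$: so $\mathrm{Stab}(r)$ is trivial, $G/H=G$, and the image of $g$ is $g$ itself, which is not interalgebraic with $c$ over $M$. Your closing remark about intersecting the normal $N$'s cannot rescue this, since it presupposes that suitable normal subgroups have already been produced.

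What Hrushovski's analysis actually does at this point is different in kind, not just in detail. Write $c=f(g)$ for an $M$-definable $f$ and consider germs of the family $F_g\colon x\mapsto f(gx)$ on the unique generic type of the connected group: equality of germs, $f(g_1x)=f(g_2x)$ for generic $x$, is a single relatively definable condition by definability of the generic type, so $K=\{h\in G : f(hx)=f(x) \text{ for generic } x\}$ is a relatively $M$-definable subgroup --- this is where relative definability really comes from. Normality is then obtained by intersecting finitely many conjugates of $K$, which suffices by the Baldwin--Saxl chain condition. Finally, $p$-internality of the quotient generic is proved not via interalgebraicity with the single element $c$, but by coding the germ of $F_g$ (equivalently the coset of $g$) into $\dcl$ of $M$, a Morley sequence of generics $\bar x$ with $\bar x\ind g\,(M)$, and the values $f(gx_i)$, each of which realizes the $p$-internal type $\tp(c/M)$; nonalgebraicity of the germ (hence infinite index of the subgroup and $\nor p$ for the quotient generics) comes from $c\notin\acl(M)$ and connectedness. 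My counterexample above illustrates why internality must be routed through the values rather than through $c$: there $K$ is trivial, yet the generic of $G/K=G$ is indeed $p$-internal, which the germ argument detects and your stabilizer argument cannot. So steps 1 and 3 of your plan are sound bookkeeping, but the central step needs the germ machinery, and as written your proof has a genuine hole there.
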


If $G$ is superstable and $p$ is chosen to have minimal $U$-rank
among types non-orthogonal to the generic of $G$, then
(stationarizations of)  generic types of $G/H$ are
domination-equivalent to a power of $p$ and the connected
component $(G/H)^0$ is $p$-connected. Moreover,
$U(G/H)=\omega^{\alpha}\cdot n$ where $U(p)=\omega^{\alpha}$ and
$n=\wt_p(G/H)$. As an immediate consequence we derive that the
generic type of a definably simple group $G$ is $p$-connected and
$p$-internal for any regular $p$ which is $\nor$ to the generic;
for any such $p$ we have $U(G)=\omega^{\alpha}\cdot n$ where
$n=\wt_p(G)$.

The situation is similar with fields, one argues as in the proof
of Corollary 3.1.2 from \cite{Wag}: suppose that $F$ is a
superstable field whose generic  is $\nor \,p$. Let $H$ be given
by Fact \ref{F11} applied to the additive group of $F$. Then
$F/b\,H$ is also $p$-internal for every non-zero $b\in F$.
$I=\bigcap_{b\neq 0}bH$ is, by Baldwin-Saxl, a finite
subintersection so $F/I$ is also $p$-internal. But $I$ is an ideal
of infinite index, hence trivial: $F$ is $p$-internal. The
conclusion is that a superstable field $F$ is $p$-internal,
$p$-semiregular  and $p$-connected whenever $p$ is regular and
$\nor$ to a generic type of a field; $U(F)=\omega^{\alpha}\cdot n$
where $n=\wt_p(F)$.

\begin{lem}\label{Lem4} The generic type of a superstable field
of $U$-rank smaller than  $\omega^{\omega}$   is ESN.
\end{lem}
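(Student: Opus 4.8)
The plan is to argue by induction on the natural number $\alpha$ for which $U(F)=\omega^{\alpha}\cdot n$ (note $\alpha<\omega$ because $U(F)<\omega^{\omega}$), reducing ESN-ness to orthogonality through Theorem \ref{Thm2}. Let $g$ be the generic type of $F$. By the field analysis recalled above, $g$ is $p$-semiregular for the regular type $p$ of $U$-rank $\omega^{\alpha}$ which is $\nor g$; since a type is ESN exactly when each of its regular components is, and ESN is preserved under non-orthogonality of equal-rank regular types, it suffices to prove that $p$ is ESN, i.e. (Theorem \ref{Thm2}) that $p$ is $\bot$ to every NENI type over a finite extension of its domain. Suppose, toward a contradiction, that $p\nor p'$ for some NENI $p'$. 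Non-orthogonal regular types are domination-equivalent and hence share their $U$-rank, so $U(p')=\omega^{\alpha}$; moreover $p'$ is internally isolated, being NENI. Passing from $p'$ to $p'_{(1)}=p'/E_1$ (which is again regular, isolated and internally isolated, hence NENI, and is primitive by Lemma \ref{Rmk01}, and is still $\nor p$ since $\nor$ is an equivalence relation on regular types), I may assume in addition that $p'$ is primitive.

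For the base case $\alpha=0$ there is nothing group-theoretic to do: $p'$ is then a NENI type of $U$-rank $1$ non-orthogonal to the generic of the infinite, and hence not $\aleph_0$-categorical, field $F$; so $p'$ is eventually non-isolated, and since eni and ESN coincide in $U$-rank $1$ this makes $p'$ an ESN type, contradicting the fact that a NENI type is never ESN.

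In the inductive step $\alpha=\beta+1$ I would feed the primitive, internally isolated NENI type $p'$ of $U$-rank $\omega^{\beta+1}$ into Proposition \ref{Prop2}. This yields a regular type $q$ of $U$-rank $\omega^{\beta}$ controlling $p'$, together with, for all large $n$ and $c\models p'_{(n)}$, a binding group $G_c$ acting transitively on the locus of $(p'_c)^n$ with generic type $\nor q$. Because $p'$ is primitive, $p'_c$ is strictly primitive by Lemma \ref{lprim}, so this action is in fact $2$-transitive (indeed highly transitive). Invoking Hrushovski's interpretation of a field from such an action of a superstable group (see \cite{Hrus} and Chapter~3 of \cite{Wag}), I obtain a $q$-connected interpretable field $K$, whose $U$-rank therefore has leading exponent $\beta<\alpha$. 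The induction hypothesis, applied to $K$, gives that the generic of $K$ is ESN.

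The contradiction then closes as follows. Both $p'_c$ and the generic of $K$ are $q$-semiregular, hence non-orthogonal; since ESN is inherited across non-orthogonality of types with matching regular components, $p'_c$ is ESN. On the other hand $p'_c$ is a forking extension of $p'$, so it is non-orthogonal to $p'$ and thus to a NENI type over a finite extension of its own domain; by Theorem \ref{Thm2} this forces $p'_c$ to be not ESN. This contradiction shows that no NENI type is non-orthogonal to $p$, so $p$, and with it $g$, is ESN. The hard part is the group-theoretic core of the inductive step: extracting a genuinely $2$-transitive (not merely transitive) action from the binding group and then interpreting from it an actual field $K$---rather than only a simple group---of strictly smaller leading exponent, so that the induction hypothesis for fields applies and the ESN property can be transported down to $p'_c$.
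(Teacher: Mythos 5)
Your inductive step buries the paper's central obstruction inside an unproved claim. From the $2$-transitive action of the binding group $G_c$ one does \emph{not} get a field: the most that can be extracted (and it is exactly what Proposition \ref{Lem6} extracts) is a $q$-connected \emph{simple group or} a $q$-connected field, and there is no known way to interpret a field inside a definably simple superstable group. This is precisely why Theorem \ref{Thm1} must carry the hypothesis that generic types of simple groups are ESN; if your step of ``interpreting an actual field $K$ rather than only a simple group'' could be carried out, that hypothesis would be superfluous and Conjecture \ref{C1} would follow outright. So what you flag as ``the hard part'' is not a deferred technicality of a correct proof but a gap coinciding with the open problem the whole paper is organized around. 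Moreover, your closing contradiction fails even if one grants you the field: you assert that $p'_c$, being a forking extension of $p'$, is non-orthogonal to $p'$. Forking extensions need not be non-orthogonal to their base, and here they are provably orthogonal: since $q$ controls $p'$, the type $p'$ is foreign to $q$ while $p'_c$ is $q$-internal, hence $p'\perp p'_c$ (alternatively: $p'$ is regular of $U$-rank $\omega^{\beta+1}$ while $p'_c$ is $q$-semiregular with regular components of $U$-rank $\omega^{\beta}$, and non-orthogonal regular types have equal $U$-rank). Thus no conflict with Theorem \ref{Thm2} arises; and note that after concluding ``$q$ is ESN'' your argument has nothing left to contradict, since in the setting of Lemma \ref{Lem4} there is no standing non-ESN assumption --- that contradiction belongs to the proof of Theorem \ref{Thm1}, not of this lemma.

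The paper's actual proof is entirely different and far more elementary: it never leaves the field $F$, whereas your reduction discards $F$ after the first step and works only with the abstract NENI type, which is why it proves ``too much.'' Assuming a NENI type $p$ non-orthogonal to the generic, $F$ is $p$-internal; one picks a generic $a$ and finite $B\subset F$ with $U(a/B)=\omega^{n}$, takes a Morley sequence $a'B',aB$, and sets $E=\{x\in F\,|\,\wt_p(x/B)=0\}$ and $E'=\{x\in F\,|\,\wt_p(x/BB')=0\}$. Internal isolation of the NENI type $p$, via Lemma \ref{new1}, makes $E\subsetneq E'$ \emph{relatively definable} subfields of $F$ with $\omega^{n}\leq U(E),U(E')<\omega^{n+1}$; since superstable fields are algebraically closed, $E'$ is an infinite-dimensional $E$-vector space, giving $U(E)\cdot m\leq U(E')$ for arbitrarily large $m$ and hence $U(E')\geq\omega^{n+1}$, a contradiction. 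No induction, no binding groups, no group configuration --- the closure of weight-zero sets under the field operations is what does the work, and it is exactly the structure your approach throws away.
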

\begin{proof} Let $F$ be a superstable    field
such that  $U(F)= \omega^{n}\cdot m$. Suppose   that the generic
type of $F$ is not ESN. By Theorem \ref{Thm2} there exists a NENI
type   $p$ which is nonorthogonal to the generic of $F$; without
loss of generality $p$ is over $\emptyset$. Then $F$ is
$p$-internal. Choose a generic $a\in F$ and a finite $B\subset F$
such that $U(a/B)=\omega^{n}$ and let $a'B', aB$ be a Morley
sequence in $\stp(aB)$. Define:
$$E'=\{x\in F\,|\,\wt_p(x/BB')=0\} \ \ \ \textmd{ and } \ \ \
E=\{x\in F\,|\,\wt_p(x/B)=0\} .
$$
$p$ is NENI so, by Lemma \ref{new1}, both $E$ and $E'$ are
relatively definable within $F$. Either of them is closed under
addition and multiplication, so they are subfields of  $F$. $E$ is
a subfield of $E'$ and, because $a'\in E'\setminus E$, it is a
proper subfield. Clearly, $U(E')<\omega^{n+1}$ and, because $a\in
E$ and $U(a/B)=\omega^{n}$, we have $\omega^{n}\leq
U(E),U(E')<\omega^{n+1}$. Since any superstable field is
algebraically closed, $E'$ is an infinite-dimensional vector space
over $E$. Every element of an $m$-dimensional subspace is
interdefinable with an element of $E^m$ over a generic basis, so
$U(E)\cdot m\leq U(E')$. Here $m$ can be chosen arbitrarily large
so $U(E')\geq \omega^{n+1}$ follows. A contradiction.
\end{proof}

In the following, well-known fact  no stability assumption is
needed.

\begin{fact}\label{Lem5}
Suppose that     a   group   $H$ acts faithfully and
2-transitively on an infinite set $X$. Then $H$ has trivial
center.
\end{fact}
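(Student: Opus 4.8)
The plan is to show directly that any central element of $H$ must act as the identity on $X$, whence it is trivial by faithfulness. Let $z\in Z(H)$; the whole argument rests on the single algebraic fact that $z$ commutes with every $h\in H$, combined with the transitivity of the action. The guiding idea is that a central permutation is completely determined by its behaviour at one point, so 2-transitivity leaves it no room to move anything.

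First I would observe that if $z$ fixed even one point then it would fix all of $X$: given $z\cdot x=x$, for any $h\in H$ we have $z\cdot(h\cdot x)=(zh)\cdot x=(hz)\cdot x=h\cdot(z\cdot x)=h\cdot x$, so $z$ fixes the whole orbit of $x$, which is all of $X$ by transitivity, and faithfulness then gives $z=1$. Thus it suffices to derive a contradiction from the mere existence of a point $x$ with $z\cdot x\neq x$. Assume $z\neq 1$; by faithfulness choose such an $x$ and set $x':=z\cdot x\neq x$. Since $X$ is infinite (here $|X|\geq 3$ already suffices), pick $y\in X\smallsetminus\{x,x'\}$, and use 2-transitivity to find $h\in H$ with $h\cdot x=x$ and $h\cdot x'=y$. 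Computing $(hz)\cdot x$ in two ways using $hz=zh$ gives
$$y=h\cdot x'=h\cdot(z\cdot x)=(hz)\cdot x=(zh)\cdot x=z\cdot(h\cdot x)=z\cdot x=x',$$
contradicting $y\neq x'$. Hence no central element moves a point, so $Z(H)$ is trivial.

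There is essentially no obstacle here: the proof is a short orchestration of the commutation relation against the transitivity hypotheses. The only points requiring care are noticing that both hypotheses are genuinely used — faithfulness both supplies a moved point and converts ``fixes everything'' into ``is the identity'', while 2-transitivity supplies the element $h$ that pins down $x$ yet transports $z\cdot x$ to a third, independently chosen point — and that the existence of that third point $y$ is exactly what the infiniteness (or merely $|X|\geq 3$) of $X$ guarantees.
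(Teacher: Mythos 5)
Your proof is correct and is essentially the paper's own argument: both locate a point $x$ moved by a central element, pick a third point $y\neq x, z\cdot x$, and use 2-transitivity to produce $h$ fixing $x$ and sending $z\cdot x$ to $y$, so that $hz$ and $zh$ disagree at $x$. Your opening paragraph (a central element fixing one point fixes all) is harmless but redundant, since faithfulness already hands you a moved point directly; your observation that $|X|\geq 3$ suffices is a correct minor sharpening.
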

\begin{proof}  Suppose that $Z(H)$ is nontrivial: $1\neq h\in Z(H)$. Let $a\in X$
be such that $h(a)\neq a$ and let  $b\in X$ be distinct from $a$
and $h(a)$. 2-transitivity implies that there exists $g\in H$
mapping $(a,h(a))$ to $(a,b)$. Then $h(g(a))=h(a)\neq b=g(h(a))$
so $g$ and $h$ do not commute. A contradiction.
\end{proof}

It is well known that the connected component of a stable group is
properly defined: it is the  intersection of all the relatively
definable  (normal) subgroups of finite index. This was
generalized  by Berline and Lascar in \cite{Be}:
$\alpha$-connected component  of a superstable group $G$ is the
intersection of all relatively definable (normal) subgroups $H$
such that $U(G/H)<\omega^{\alpha}$; denote it by $G^{\alpha}$.
Then $G^{\alpha}$ is the smallest type-definable subgroup whose
index has $U$-rank  $<\omega^{\alpha}$. However, the meaning of
`$q$-connected component of a group' is not clear at all in the
general stable case;  it requires some additional assumptions.

Below we will be interested in groups which are $q$-internal where
$q$ is regular and has $U$-rank $\omega^{\alpha}$. For such a
group  $G$ we have $U(G)=\omega^{\alpha}\cdot m+\xi$ where
$\xi<\omega^{\alpha}$. Here    $U(G^{\alpha})=\omega^{\alpha}\cdot
m$ and $m=\wt_q(G)=\wt_q(G^{\alpha})$. \ $G^{\alpha}$ is
$q$-connected and   it is the largest $q$-connected subgroup of
$G$. Therefore,  $q$-connected  subgroups of $G$    are precisely
those which are $\alpha$-connected.

\begin{prop}\label{Lem6}
Suppose that $q\in S_1(A)$ is a regular  type of $U$-rank
$\omega^{\alpha}$ which controls a primitive, NENI type $p\in
S_1(A)$ of $U$-rank $\omega^{\alpha+1}$.  Then there exists a
simple, $q$-connected group or a $q$-connected  field.
\end{prop}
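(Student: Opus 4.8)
The plan is to extract from Proposition \ref{Prop2} a binding group whose action is $2$-transitive, and then read off either a simple group or a field by standard stable group theory. Since $p$ is NENI it is regular, isolated and internally isolated, and as $U(p)=\omega^{\alpha+1}$ is a limit ordinal the hypotheses of both Proposition \ref{Prop2} and Lemma \ref{lprim}(ii) are satisfied. First I would fix $n\geq n_p$ and $c\models p_{(n)}$ with $U(p_c)\geq\omega^{\alpha}$; such a Grassmannian exists by the construction in the proof of Proposition \ref{Prop2}(1), which produces $d\models p_{(m)}$ with $U(p_d)\geq\omega^{\alpha}$. For this $c$, Proposition \ref{Prop2}(2) gives a transitive action of the binding group $G_c=\Aut_{q(\bar M)A}(p_c(\bar M))$ on the locus of $(p_c)^n$, and Proposition \ref{Prop2}(3) gives that its generic type is $\nor q$; moreover $G_c$ is $q$-internal, being a binding group over $q$, so the discussion preceding the proposition applies to it.

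Next I would upgrade transitivity to $2$-transitivity. By Lemma \ref{lprim}(ii) the type $p_c$ is strictly primitive, so any two distinct realizations of $p_c$ are independent over $Ac$ and hence realize $(p_c)^2$. Since $n\geq n_p\geq 2$, transitivity on the locus of $(p_c)^2$ is precisely transitivity on ordered pairs of distinct elements of the infinite set $X:=p_c(\bar M)$; as $G_c$ acts faithfully on $X$ by construction, its action is faithful and $2$-transitive. Fact \ref{Lem5} then yields $Z(G_c)=1$.

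Now I would pass to the $\alpha$-connected component $G_c^{\alpha}$. Because $G_c$ is $q$-internal with $U(q)=\omega^{\alpha}$ and its generic is $\nor q$, the discussion preceding the proposition shows that $G_c^{\alpha}$ is $q$-connected and is the largest $q$-connected subgroup of $G_c$, with $U(G_c^{\alpha})=\omega^{\alpha}\cdot m\geq\omega^{\alpha}>0$ (here $U(G_c)\geq U(X)=U(p_c)\geq\omega^{\alpha}$ by transitivity). In particular $G_c^{\alpha}$ is a nontrivial, characteristic, hence normal, subgroup of $G_c$, and its center $Z(G_c^{\alpha})$, being characteristic in $G_c^{\alpha}$, is normal in $G_c$ as well. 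Since a $2$-transitive action is primitive, every nontrivial normal subgroup of $G_c$ is transitive, so $Z(G_c^{\alpha})$ is either trivial or transitive. If it is transitive then, being abelian, it is regular, and the $2$-transitive action of $G_c$ becomes affine with regular abelian normal subgroup $Z(G_c^{\alpha})$; the point stabilizer then acts transitively on $Z(G_c^{\alpha})\smallsetminus\{1\}$, and the standard analysis of such affine actions in the superstable context produces a $q$-connected field, finishing this case. Otherwise $Z(G_c^{\alpha})=1$ and $G_c^{\alpha}$ is a $q$-connected group with trivial center.

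In this remaining case I would invoke the general principle recorded in the preceding discussion: for a regular $q$, a $q$-connected group with trivial center interprets a $q$-connected simple group or a $q$-connected field, via the Hrushovski--Zilber analysis built on Fact \ref{F11}. Either conclusion is exactly what the proposition demands. The main obstacle is not the $2$-transitivity, which follows cleanly from strict primitivity and Proposition \ref{Prop2}, but the final passage: controlling $Z(G_c^{\alpha})$ and then actually extracting the simple group (via the socle) or the field (via the affine case) is where the real weight lies, and it rests on the structure theory of superstable groups rather than on any elementary argument.
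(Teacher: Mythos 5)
Your first half is sound and is in fact a legitimate variant of the paper's argument. The paper makes the $\alpha$-connected component $H=G_c^{\alpha}$ itself act transitively on the locus of $(p_c)^n$ (via a rank computation: $U(G_c/H)<\omega^{\alpha}$ bounds the rank of any name $d$ of an $H$-orbit, $d\in\dcl(\bar a)$ for $\bar a\models (p_c)^n$ with $U(\bar a)=\omega^{\alpha}\cdot k$ forces $U(d)=0$, and stationarity of $(p_c)^n$ then leaves a single orbit), so that $2$-transitivity and Fact \ref{Lem5} give $Z(H)=1$ directly. You instead prove $Z(G_c)=1$ and control $Z(G_c^{\alpha})$ through primitivity of the $2$-transitive action; the centerless branch of that dichotomy is fine, but your affine branch is itself a gap: the field theorem you would need (Theorem 5.3.1 of \cite{Wag}, or Lemma 2 of \cite{Hrus}) requires an \emph{abelian} group acting on a minimal abelian group, whereas your point stabilizer has no reason to be abelian, and you do not address why the resulting field would be $q$-connected. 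The paper's arrangement avoids this branch altogether, which is precisely why it proves transitivity of $H$ first.

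The decisive gap, however, is your final step. You invoke, as ``a general principle recorded in the preceding discussion,'' the implication: a $q$-connected group with trivial center yields a $q$-connected simple group or a $q$-connected field. But in the paper that sentence in Section 3 is a \emph{summary of the proof of Proposition \ref{Lem6} itself}; it carries no independent reference and does not follow from Fact \ref{F11} alone. Citing it makes your proposal circular exactly where the real content lies. The paper's argument for this step is: among $q$-internal, $q$-connected groups with trivial center choose $G$ of minimal $q$-weight; by the Zilber Indecomposability Theorem (Corollary 3.6.15 of \cite{Wag}) there is a relatively definable normal series whose quotients are simple or abelian, and $\wt_q(G_1)<\wt_q(G)$ with $G/G_1$ $q$-connected; if $G/G_1$ is simple one is done, and if it is abelian then $G'$ is a proper, relatively definable, $q$-connected subgroup of smaller $q$-weight, so minimality forces $Z(G')\neq 1$; one then takes $K$ a $G$-minimal subgroup of $Z(G')$, rules out $U(K)<\omega^{\alpha}$ using $\alpha$-connectedness of $G$ and centralizers, shows $K=K^{\alpha}$, and applies the field theorem to the faithful action of the \emph{abelian}, $q$-connected $G/C_G(K)$ on the $G/C_G(K)$-minimal, $q$-connected abelian $K$. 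Note how the minimal-weight descent manufactures exactly the abelian-acting-on-abelian configuration that the field theorem requires --- the commutativity your affine branch was missing. Without this descent and the $K\leq Z(G')$ construction, your proposal assumes the conclusion it is meant to establish.
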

\begin{proof} Without loss of generality assume $A=\emptyset$.
Fix $n\geq n_p$ sufficiently large and $c\models p_{(n)}$ so that
$U(p_c)\geq \omega^{\alpha}$ and Proposition \ref{Prop2}(3)
applies: the binding group $G_c$ is $\nor q$. Since $G_c$ is
$q$-internal $U(G_c)=\omega^{\alpha}\cdot m+\xi$ where
$\xi<\omega^{\alpha}$. Let $H\leq G_c$ be the $\alpha$-connected
component of $G_c$.

 We {\em claim} that $H$ acts
transitively on the locus of $(p_c)^n$.  By Proposition
\ref{Prop2} $G_c$ acts transitively on the locus of $(p_c)^n$, so
$G_c/H$ acts transitively on the set of $H$-orbits.
$U(G_c/H)<\omega^{\alpha}$ implies that  the $U$-rank of any (name
of an) orbit is $<\omega^{\alpha}$. Let $d$ be a name of such an
orbit. Clearly, $d$ is in the $\dcl$ of some $\bar a\models
(p_c)^n$.  By Remark \ref{Rmk1}(i) $p_c$ is semiregular. It is
also $q$-internal because $p$ is $q$-controlled, so
$U(p_c)\geq\omega^{\alpha}$ implies $U(\bar
a)=\omega^{\alpha}\cdot k$. Then $U(d)<\omega^{\alpha}$ and
$d\in\dcl(\bar a)$ imply $U(d)=0$, so there are only finitely many
orbits. Because $(p_c)^n$ is stationary, there is a unique
$H$-orbit and $H$ acts transitively on $(p_c)^n$, proving the
claim.

$p$ is a primitive, NENI type so Lemma \ref{lprim}(ii) applies:
$p_c$ is strictly primitive. Transitivity of the action of $H$ on
$(p_c)^2(\bar M)$ and strict primitivity of $p_c$ imply that $H$
acts 2-transitively on  $p_c(\bar M)$. By Fact \ref{Lem5}  $H$ has
trivial center. Altogether: $H$ is $q$-internal, $q$-connected
and has trivial center.

Now, suppose that $G$ is a $q$-internal, $q$-connected group of
minimal $q$-weight having trivial center. Clearly, $G$ is
non-abelian. There exists a series of normal, relatively definable
subgroups of $G=G_0> G_1> ...> G_n=\{1\}$ \ such that each
quotient $G_i/G_{i+1}$ is either abelian or simple (this is a
consequence of the Zilber Indecomposability Theorem, see Corollary
3.6.15 in \cite{Wag}). Then, because $G$ is non-abelian and
$q$-connected, we have $\wt_q(G_1)<\wt_q(G)$. Since $G$ is
$q$-connected  $G/G_1$ is $q$-connected, too. Now we have two
cases: $G/G_1$ is either simple or abelian. In the first  we are
done, so suppose that $G/G_1$ is abelian and we will find a
$q$-connected field.

Since $G/G_1$ is abelian the commutator subgroup $G'$ is a proper
subgroup of $G$; also, it is  relatively definable in $G$ and
$q$-connected   (again by indecomposability, see Corollary 3.6.13
in \cite{Wag}).  The  minimality of $\wt_q(G)$ implies that $G'$
has non-trivial center: $Z(G')$ is non-trivial and $G$-invariant.
Let $K$ be a $G$-minimal (minimal, type-definable, nontrivial,
$G$-invariant) subgroup of $Z(G')$. First we rule out the
possibility $U(K)<\omega^{\alpha}$: if it holds then the $U$-rank
of any $G$-orbit in $K$ is $<\omega^{\alpha}$, so
$[G:C_G(k)]<\omega^{\alpha}$ holds for all $k\in K\smallsetminus
\{1\}$. Since $G$ is $\alpha$-connected we have $G=C_G(k)$ and $k$
is central in $G$. A contradiction. Therefore $U(K)\geq
\omega^{\alpha}$  and $K^{\alpha}$, the $\alpha$-connected
component of $K$,  is non-trivial. For any $g\in G$ we have
$U(K/gK^{\alpha})=U(K/K^{\alpha})<\omega^{\alpha}$, which implies
($gK^{\alpha}\supseteq K^{\alpha}$ and, similarly,
$g^{-1}K^{\alpha}\supseteq K^{\alpha}$ so)
$gK^{\alpha}=K^{\alpha}$. Thus $K^{\alpha}$ is $G$-invariant and
$\alpha$-connected. Because $K$ is $G$-minimal we have
$K=K^{\alpha}$ and  $K$ is $\alpha$-connected.

Let $C_G(K)$ be the pointwise centralizer of $K$. It is a
relatively definable, normal subgroup of  $G$ and it  contains
$G'$ (because $K\subset Z(G')$):   $G/C_G(K)$ is  abelian. Also,
because $G$ is centerless, $C_G(K)$ is not the whole of $G$. We
conclude that $G/C_G(K)$ is non-trivial and, because $G$ is
$q$-connected, $G/C_G(K)$  is $q$-connected, too. Further, for any
$g\notin C_G(K)$ there exists $k\in K$ such that $g(k) \neq k$; it
follows that $G/C_G(K)$ acts faithfully on $K$. Since the orbits
under the action of $G$ and $G/C_G(K)$ on $K$ are the same, $K$ is
a $G/C_G(K)$-minimal, $q$-connected abelian group. Therefore we
have a faithful action of a $q$-connected, abelian group
$G/C_G(K)$ on the abelian, $q$-connected, $G/C_G(K)$-minimal group
$K$. In this situation Zilber's Indecomposability Theorem implies
that $K$ is the additive group of a field: see Theorem 5.3.1 in
\cite{Wag}, or (the proof of) Lemma 2 from \cite{Hrus}. $K$ is a
$q$-connected field. This completes the proof of the proposition.
\end{proof}

\noindent {\em Proof of Theorem 1.}
 \ Suppose that $U(T)\geq
\omega^{\omega}$ holds and that the generic type of any definable,
infinite, simple group is ESN. We will prove that $T$ has
$2^{\aleph_0}$ countable models. By Proposition \ref{Prop1}  it
suffices to find an infinite $I\subset\omega$ and a family
$\{p_{n}|n\in I\}$ of regular, ESN types over finite domains  such
that $U(p_{n})=\omega^n$ holds for all $n\in I$. Suppose, for a
contradiction, that such a family  does not exist; let $n$ be such
that any regular type of $U$-rank $\omega^m$  for $m\geq n$ is not
ESN. Then, by Theorem \ref{Thm2}, any such type is non-orthogonal
to a NENI type. Fix a NENI type $p'$ of $U$-rank $\omega^{n+1}$
and, without loss of generality, assume that $p'\in
S_1(\emptyset)$ is primitive (by Remark \ref{Rmk1}, say).

\smallskip
Now we apply  Proposition \ref{Prop2}: there exists a finite set
$A$ and a regular type $q\in S_1(A)$  which controls $p'$ and has
$U$-rank $\omega^n$. Since $p'\,|\,A$ is NENI, by Remark
\ref{Rmk1}, $p=(p'|A)/E_1$ is a primitive, NENI type of $U$-rank
$\omega^{n+1}$. Any forking extension of $p$ is $q$-internal
because it is   in the dcl of some forking extension  of $p'$, and
the latter extension is $q$-internal  because $p'$ is
$q$-controlled. Hence $q$ controls $p$. We have the following
situation: $p,q\in S_1(\emptyset)$ are regular, $p$ is a primitive
NENI type, $U(p)=\omega^{n+1}$, $U(q)=\omega^{n}$ and $q$ controls
$p$. By Proposition \ref{Lem6} there exists a $q$-connected,
simple group or a $q$-connected field.   By  our assumption on
generic types of simple groups and Lemma \ref{Lem4}, in either
case the generic type is $ESN$; by Theorem \ref{Thm2} $q$ is
$ESN$. A contradiction. \qed

\bigskip
It was conjectured in \cite{T1} that the answer to Question
\ref{q1} is affirmative. Here we will be a little bit more
careful:

\begin{con} The generic type of a simple superstable group of
$U$-rank $\omega^{\alpha+1}\cdot n$ is ESN.
\end{con}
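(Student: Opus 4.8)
The plan is to argue by contradiction and thereby reduce the conjecture to a strengthened form of Question \ref{q1}. So suppose $G$ is an infinite simple superstable group with $U(G)=\omega^{\alpha+1}\cdot n$ whose generic type $g$ is \emph{not} ESN. Since ESN of a possibly non-regular type is equivalent to ESN of each of its regular components (Proposition 2.1 of \cite{T}), and since by the structure theory recalled before Lemma \ref{Lem4} every regular type non-orthogonal to $g$ has $U$-rank $\omega^{\alpha+1}$, there is a regular component $r$ of $g$, of $U$-rank $\omega^{\alpha+1}$, that is not ESN. By Theorem \ref{Thm2}, $r$ is non-orthogonal to a NENI type $p$, which (replacing $p$ by $p/E_1$, still NENI and now primitive by Remark \ref{Rmk1}) we may take to be primitive. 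Applying Fact \ref{F11} to $G$ and $p$ produces a relatively definable normal $H\trianglelefteq G$ of infinite index with $p$-internal generics; simplicity of $G$ forces $H=\{1\}$, so $G$ itself is $p$-internal, $p$-connected, and its generic is non-orthogonal to the NENI type $p$. The whole problem is thus to show that this configuration is impossible.

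The key step is to exploit the internal isolation that NENI carries. Since $p$ is NENI it is internally isolated (Lemma 1.2 of \cite{T}), and by Corollary 1.1 of \cite{T} this is inherited by the regular components of $g$; together with Lemma \ref{new1} this makes the $\cl_p$-geometry relatively definable on all finite powers of $p(\bar M)$. First I would use this, plus $p$-internality of the type-definable group $G$, to witness internality over a finite base: there should be a finite $B$ and a fixed tuple $\bar e$ of realizations of a power of $p$ with $G\subseteq\dcl(\bar e B)$ (a fundamental system for the binding group action). Because $p$ is NENI the type $\tp(\bar e/B)$ is isolated and remains isolated under nonforking extensions over finite sets; pushing forward along the $B$-definable map producing $G$ from $\bar e$, one concludes that the generic of $G$ is isolated over $B$, and I would further argue, using that $p$ is not eni, that no nonforking extension of $g$ over a finite set is non-isolated. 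In other words, the generic of a simple group internal to a NENI type would fail to be eventually non-isolated, which is precisely what Question \ref{q1} predicts to be impossible. Thus the conjecture reduces to proving that the generic of an infinite simple superstable group of $U$-rank $\omega^{\alpha+1}\cdot n$ is eventually (indeed strongly) non-isolated.

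To attack this core I would imitate the field case, Lemma \ref{Lem4}, whose engine is the relative definability of $\cl_p$-closures from Lemma \ref{new1}. Concretely, from a generic $a$ with $U(a/B)=\omega^{\alpha+1}$ and a Morley copy $a',B'$, one forms the relatively definable subgroups $E=\cl_p(aB)\cap G$ and $E'=\cl_p(aa'BB')\cap G$, which satisfy $E\subsetneq E'$ with both $U$-ranks in the interval $[\omega^{\alpha+1},\omega^{\alpha+2})$. In the field case such a proper containment contradicts infinite-dimensionality of an algebraically closed field over a proper subfield; for a simple group the missing ingredient is a field, so I would feed the $p$-connected, centerless configuration of Proposition \ref{Lem6} (through the binding group $G_c$, its $2$-transitive action from Proposition \ref{Prop2}, and Zilber indecomposability as in \cite{Wag} and \cite{Hrus}) into an induction on $\alpha$, interpreting a $q$-connected field or simple group of strictly smaller $U$-rank $\omega^{\alpha}\cdot m$ and closing the base by Lemma \ref{Lem4} or the inductive hypothesis.

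The hard part will be that this induction does not close on its own: the field or group one interprets is controlled by a $q$ that is \emph{foreign} to $p$, so knowing its generic is ESN yields only that $q$ is ESN and never contradicts the existence of the NENI type $p$ lying above it. Hence the decisive difficulty is to rule out a simple superstable group being internal to a NENI type outright — equivalently, to settle Question \ref{q1}, strengthened to ESN, in infinite rank, where the finite-rank $\aleph_1$-categoricity argument sketched in the introduction is unavailable and the dimension count of Lemma \ref{Lem4} has no intrinsic group analogue. I expect the missing ingredient to be a rigidity statement showing that a $p$-connected simple group carrying a relatively definable $\cl_p$-geometry must interpret an infinite field non-orthogonal to $p$ \emph{itself} (rather than to the lower type $q$); this is the step I cannot currently supply, and it is where the conjecture's difficulty is concentrated. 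A secondary, technical obstacle is that Lemma \ref{Lem4} is proved only below $\omega^{\omega}$, so the base case of the induction would also have to be extended to arbitrary $\alpha$.
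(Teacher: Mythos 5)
You have not proved the statement, and no proof exists to compare against: this is Conjecture 2, which the paper deliberately leaves \emph{open} --- indeed Theorem \ref{Thm1} assumes precisely this property of simple groups as a hypothesis, and Question \ref{q1} records that even the weaker eni version is unknown. Your own text concedes this, so the review reduces to assessing your partial reductions. The central one is nearly circular: you reduce the conjecture to ruling out a simple superstable group whose generic is internal to a NENI type, but by Theorem \ref{Thm2} the generic fails to be ESN iff it is non-orthogonal to a NENI type $p$, and by Fact \ref{F11} plus simplicity non-orthogonality to $p$ already forces $p$-internality of all of $G$; so ``a simple group internal to a NENI type'' is not a strengthened target but essentially a restatement of the negation of the conjecture. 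Moreover, you correctly diagnose why the paper's own machinery cannot close this loop: Proposition \ref{Lem6} outputs a $q$-connected simple group or field where $q$ \emph{controls} $p$ and is therefore foreign to $p$, so ESN of $q$ (from Lemma \ref{Lem4} or an inductive hypothesis) contradicts non-ESN of $q$ --- which is how Theorem 1's proof runs --- but says nothing against the NENI type $p$ sitting above it. The ``rigidity statement'' you posit (a field non-orthogonal to $p$ itself) is exactly the missing content, and you supply no argument for it.

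Beyond the acknowledged gap, one intermediate claim you treat as routine is not: in your second paragraph you assert that $p$-internality of $G$, for $p$ NENI, yields a finite base $B$ and tuple $\bar e$ of realizations of $p$ with $G\subseteq\dcl(\bar eB)$ and that isolation then pushes forward to the generic. Internality a priori gives definability only over an $\aleph_1$-saturated model, and even granting a fundamental system over a finite set, the witnessing tuple $\bar e$ need not realize the independent power $p^k$: its coordinates may realize \emph{forking} extensions of $p$, and NENI guarantees isolation only of nonforking extensions over finitely extended domains (the behaviour of the forking extensions $p_c$ is precisely what Sections 2--3 of the paper manage definably without ever proving them isolated). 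So even your reduction of the ESN conjecture to a strengthened Question \ref{q1} is incomplete. The sound fragments --- that the weight-zero sets $E\subseteq E'$ are relatively definable subgroups via Lemma \ref{new1}, that the dimension count of Lemma \ref{Lem4} has no intrinsic group analogue, and that Lemma \ref{Lem4} as stated only covers rank below $\omega^{\omega}$ --- faithfully locate the difficulty, but they constitute a map of the obstruction, not a proof.
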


\end{document}